\newtheorem{theorem}{Theorem}[section]
\newtheorem{lemma}[theorem]{Lemma}
\newtheorem{proposition}[theorem]{Proposition}
\newtheorem{corollary}[theorem]{Corollary}
\theoremstyle{definition}
\theoremstyle{remark}
\numberwithin{equation}{section}
\def \la {\lambda}
\def \al {\alpha}
\def\A{{\mathbf A}}
\def\B{{\mathbf B}}
\def\C{{\mathbf C}}
\def\D{{\mathbf D}}
\def\E{{\mathbf E}}
\def\G{{\mathbf G}}
\def\HH{{\mathcal H}}
\def\PP{{\mathcal P}}
\def\d{\delta}
\begin{document}

\title[Hardy's inequalities in finite dimensional Hilbert spaces]{Hardy's inequalities in finite dimensional Hilbert spaces}
\thanks{Research supported by the Brazilian Science Foundations FAPESP under Grants 2016/09906-0 and  2016/10357-1 and CNPq under Grant 306136/2017-1 and the 
Bulgarian National Research Fund through Contract DN 02/14.}

\author{Dimitar K. Dimitrov}
\address{Departamento de Matem\'atica, IBILCE, Universidade Estadual Paulista, 15054-000
S\~{a}o Jos\'{e} do Rio Preto, SP, Brazil }

\email{d\_k\_dimitrov@yahoo.com}

\author{Ivan Gadjev}
\address{Faculty of Mathematics and Informatics, Sofia University "St. Kliment Ohridski", 5 James Bourchier Blvd.,
1164 Sofia, Bulgaria}
\email{gadjev@fmi.uni-sofia.bg}

\author{Geno Nikolov}
\address{Faculty of Mathematics and Informatics, Sofia University "St. Kliment Ohridski", 5 James Bourchier Blvd.,
1164 Sofia, Bulgaria}
\email{geno@fmi.uni-sofia.bg}

\author{Rumen Uluchev}
\address{Faculty of Mathematics and Informatics, Sofia University "St. Kliment Ohridski", 5 James Bourchier Blvd.,
1164 Sofia, Bulgaria}
\email{rumenu@fmi.uni-sofia.bg}

\subjclass[2010]{Primary 26D10, 26D15; Secondary 33C45, 15A42}


\dedicatory{}


\begin{abstract}
We study the behaviour of the smallest possible constants $d_n$ and $c_n$  in Hardy's inequalities 
$$
\sum_{k=1}^{n}\Big(\frac{1}{k}\sum_{j=1}^{k}a_j\Big)^2\leq
d_n\,\sum_{k=1}^{n}a_k^2, \qquad  (a_1,\ldots,a_n) \in \mathbb{R}^n
$$
and 
$$
\int_{0}^{\infty}\Bigg(\frac{1}{x}\int\limits_{0}^{x}f(t)\,dt\Bigg)^2
dx \leq c_n \int_{0}^{\infty} f^2(x)\,dx, \ \ f\in \mathcal{H}_n,
$$
for the finite dimensional spaces $\mathbb{R}^n$ and $\mathcal{H}_n:=\{f\,:\,
\int_0^x f(t) dt =e^{-x/2}\,p(x)\ :\ p\in \mathcal{P}_n, p(0)=0\}$, where $\mathcal{P}_n$ is the set of real-valued algebraic polynomials of
degree not exceeding $n$. The constants $d_n$ and $c_n$ are identified as the smallest 
eigenvalues of certain Jacobi matrices and the two-sided estimates for $d_n$ and $c_n$ of the form
$$
4-\frac{c}{\ln n}< d_n, c_n<4-\frac{c}{\ln^2 n}\,,\qquad c>0\,
$$
are established. 
\end{abstract}

\maketitle

\section{Introduction and statement of the results}

Denote by $\ell^p_+$  and $L^p_+[0,\infty)$  the classes of nonnegative sequences $\{a_k\} \in \ell^p$ and of nonnegative functions $f \in L^p[0,\infty)$, respectively. 
In a series of papers Hardy \cite {H1919, H1920, H1925} proved the inequalities 
\begin{equation}\label{e1.1.5}
\sum_{k=1}^{\infty}\Big(\frac{1}{k}\sum_{j=1}^{k}a_j\Big)^p\leq
\Big(\frac{p}{p-1}\Big)^{p}\,\sum_{k=1}^{\infty}{a_k^p},\ \ \ p>1,\ \ \  \{ a_k \}\in \ell^p_+,
\end{equation}
and
\begin{equation}\label{e1.1}
\int\limits_{0}^{\infty}\Bigg(\frac{1}{x}
\int\limits_{0}^{x}f(t)\,dt\Bigg)^{p}dx\leq
\Big(\frac{p}{p-1}\Big)^{p}\int\limits_{0}^{\infty}\ [f(x)]^{p}\,dx,\ \ \ p>1,\ \ \  f\in L^p_+[0,\infty).
\end{equation}
Nowadays (\ref{e1.1.5}) and (\ref{e1.1}) are called Hardy's inequalities. We refer to the monographs \cite{HLP1967, KP2003, KMP2007} and
the references therein, as well as to the nice survey \cite{KMP2006}, for the history and 
various generalisations and applications of Hardy's inequalities. 

Despite that, under the assumptions that $a_k\geq 0$ and $f(x)\geq 0$, the only extremizers in \eqref{e1.1.5} and \eqref{e1.1}  are 
the sequence and the function which vanish identically (see \cite[p. 246]{HLP1967}), E. Landau \cite{Lan} observed that the constant $(p/(p-1))^{p}$ is the smallest possible one, for which 
\eqref{e1.1.5} and  \eqref{e1.1} hold. 



In the case $p=2$ the assumption for nonnegativity of $\,\{a_k\}\,$  and $f$ 
can be dropped, and inequalities \eqref{e1.1.5}  and \eqref{e1.1} become 
\begin{equation}\label{e1.2.5}
\sum_{k=1}^{\infty}\Big(\frac{1}{k}\sum_{j=1}^{k}a_j\Big)^2\leq
4\,\sum_{k=1}^{\infty}{a_k^2}\, 
\end{equation}
and
\begin{equation}\label{e1.2}
\int\limits_{0}^{\infty}\Bigg(\frac{1}{x}
\int\limits_{0}^{x}f(t)\,dt\Bigg)^{2}dx\leq
4\,\int\limits_{0}^{\infty}f^{2}(x)\,dx\,,
\end{equation}

Motivated by the question about the existence of ``almost extremizing'' sequences and function for the latter inequalities, 
in the present paper we examine the best constant $d_n$ and $c_n$ in the Hardy inequalities
\begin{equation}\label{e1.5}
\sum_{k=1}^{n}\Big(\frac{1}{k}\sum_{j=1}^{k}a_j\Big)^2\leq
d_n\,\sum_{k=1}^{n}{a_k^2}, \qquad (a_1,a_2,\ldots,a_n)\in
\mathbb{R}^n
\end{equation}
and 
\begin{equation}\label{e1.3}
\int\limits_{0}^{\infty}\Bigg(\frac{1}{x}\int\limits_{0}^{x}f(t)\,dt\Bigg)^2dx
\leq c_n \int\limits_{0}^{\infty}f^2(x)\,dx,\qquad f\in \HH_n,
\end{equation}
for the finite dimensional spaces $\mathbb{R}^n$ and $\HH_n$, where the latter is defined by 
$$
\HH_n=\{f\;:\; \int_0^x f(t)\, dt = e^{-x/2} p(x),\ p\in \PP_n,\ p(0)=0 \}\,.
$$

Since $\mathbb{R}^n \subset \ell^2$ and $\HH_n\subset L^2(0,\infty)$, it follows from \eqref{e1.2.5} and \eqref{e1.2}
that $d_n\leq 4$, $c_n\leq 4$.
We show that both $d_n$ and $c_n$ converge to $4$ as $n\to \infty$, although the convergence speed turns out to be rather slow.
Our main results read as follows: 

\begin{theorem}\label{t1.2}
Let $\,d_n\,$ be the smallest possible constant such that the
inequality \eqref{e1.5} holds. Then $\,d_n\,$ obeys the estimates
$$
4\Bigg(1-\frac{4}{\ln n +4}\Bigg)\leq d_n \leq
4\Bigg(1-\frac{8}{(\ln n + 4)^2}\Bigg)\,,\qquad n\geq 3.
$$
\end{theorem}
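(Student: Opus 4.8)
The plan is to turn the extremal problem for $d_n$ into an eigenvalue estimate and then squeeze that eigenvalue from both sides. \emph{Reduction.} Write $d_n=\|A_n\|^2=\lambda_{\max}(A_n^{\top}A_n)$, where $A_n$ is the lower--triangular averaging matrix $(A_n)_{kj}=1/k$ for $j\le k$. Its inverse is bidiagonal, $(A_n^{-1})_{kk}=k$, $(A_n^{-1})_{k,k-1}=-(k-1)$, so $B_n:=(A_n^{\top}A_n)^{-1}=A_n^{-1}(A_n^{-1})^{\top}$ is a Jacobi matrix, $d_n=1/\mu_n$ with $\mu_n$ its least eigenvalue, and $\langle B_nv,v\rangle=\|(A_n^{-1})^{\top}v\|^2$. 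Reading off $(A_n^{-1})^{\top}$ gives the clean identity
$$
\langle B_nv,v\rangle=n^2v_n^2+\sum_{k=2}^{n}(k-1)^2(v_k-v_{k-1})^2 ,\qquad v\in\mathbb{R}^n ,
$$
so everything reduces to showing $\mu_n\ge\big(4-32/(\ln n+4)^2\big)^{-1}$ (the upper bound for $d_n$) and $\mu_n\le\big(4-16/(\ln n+4)\big)^{-1}$ (the lower bound for $d_n$).

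\emph{Upper bound for $d_n$.} This is $\langle B_nv,v\rangle\ge\mu_n^*\|v\|^2$ with $\mu_n^*=\big(4-32/(\ln n+4)^2\big)^{-1}=\tfrac14+O(1/\ln^2 n)$. I would use the discrete ground--state (Hardy) substitution $v_k=\phi_k u_k$ with a positive sequence $\phi$: the elementary algebraic identity for $w_k(\phi_ku_k-\phi_{k-1}u_{k-1})^2$ followed by a summation by parts rewrites $\langle B_nv,v\rangle$ as a sum of manifestly nonnegative ``kinetic'' terms --- $n^2\phi_n^2u_n^2$ and $(k-1)^2\phi_k\phi_{k-1}(u_k-u_{k-1})^2$ --- plus $\sum_k\Phi_k u_k^2$ with explicit coefficients $\Phi_k$. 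Discarding the kinetic terms reduces the inequality to the pointwise \emph{discrete supersolution} inequality
$$
(k-1)^2(\phi_k-\phi_{k-1})-k^2(\phi_{k+1}-\phi_k)\ \ge\ \mu_n^*\,\phi_k ,\qquad 2\le k\le n-1 ,
$$
plus two endpoint inequalities (at $k=n$ one keeps $n^2\phi_n^2u_n^2$, which makes that one trivial since $n^2\gg\mu_n^*$). The continuous model $-(x^2\phi')'\ge\mu\phi$ is solved exactly by $x^{-1/2}$ with $\mu=\tfrac14$; to gain the extra $\asymp1/\ln^2 n$ I take $\phi_k=k^{-1/2}g(\ln k)$ with $g$ a fixed positive \emph{concave} profile on $[0,\ln n]$ (e.g.\ $g(t)=\cos(\omega t)$ with $\omega$ just below $\pi/(2\ln n)$, or a quadratic $g(t)=C_n-\tfrac12t^2$ with $C_n\asymp(\ln n+4)^2$, which is where the shift ``$4$'' enters). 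Expanding $\phi_k=\psi(k)$ for the smooth function $\psi(x)=x^{-1/2}g(\ln x)$, the left side of the supersolution inequality equals $\big(\tfrac14-g''(\ln k)/g(\ln k)\big)\phi_k$ plus a discretisation correction of size $O(1/k)$ whose leading part $\tfrac1{4k}\phi_k$ has the \emph{favourable} sign; since $-g''/g\asymp(\ln n+4)^{-2}$ uniformly on $[0,\ln n]$, the required estimate follows.

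\emph{Lower bound for $d_n$.} Here one exhibits an almost--extremal vector. Taking $a_k=k^{-1/2}$ in \eqref{e1.5} (equivalently, a near--minimiser of the Rayleigh quotient of $B_n$ that is small at $k=n$), we have $\sum_{k=1}^na_k^2=H_n:=\sum_{k=1}^n1/k$, and Euler--Maclaurin gives $\sum_{j=1}^kj^{-1/2}=2\sqrt k+\zeta(\tfrac12)+O(k^{-1/2})$, hence $\sum_{k=1}^n\big(\tfrac1k\sum_{j\le k}a_j\big)^2=4H_n+O(1)$ with an explicitly estimable constant. Dividing and comparing $H_n$ with $\ln n+4$ yields the Rayleigh quotient $\ge4-16/(\ln n+4)$, i.e.\ the claimed lower bound for $d_n$.

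\emph{The main obstacle} is the supersolution step: the gain $\mu_n^*-\tfrac14\asymp(\ln n+4)^{-2}$ is minuscule, so that inequality must hold \emph{uniformly} in $k$ against all discretisation and endpoint errors, which near $k=n$ are of the very same order; selecting the profile $g$ and carrying the explicit constants (the ``$8$'' and the shift ``$4$'') through is the delicate part. Finitely many small values of $n$ --- hence the hypothesis $n\ge3$ --- are checked directly.
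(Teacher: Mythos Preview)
Your reduction is correct and matches the paper: your $B_n$ is exactly the paper's $\mathbf{H}_n$ (they first write $d_n^{-1}=\lambda_{\min}(\mathbf{F}_n)$ with $\mathbf{F}_n=\mathbf{U}_n\mathbf{U}_n^{\intercal}$ and then pass to the similar matrix $\mathbf{H}_n=\mathbf{U}_n^{\intercal}\mathbf{U}_n$, which is your $A_n^{-1}(A_n^{-1})^{\top}$).

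For the \emph{lower} bound the idea is the same---plug in a trial sequence---but the paper's choice $a_j=\sqrt{j}-\sqrt{j-1}$ is sharper than your $a_j=j^{-1/2}$: the partial sums telescope \emph{exactly} to $\sqrt{k}$, so the left side of \eqref{e1.5} is precisely $H_n$, and the simple bound $(\sqrt{k}+\sqrt{k-1})^2>4(k-1)$ gives $\sum a_k^2<1+H_n/4$; then $d_n>4H_n/(H_n+4)=4-16/(H_n+4)>4-16/(\ln n+4)$ in two lines. With your sequence the numerator is only $4H_n+O(1)$ via Euler--Maclaurin, and you never check that the implied constant actually delivers the precise $16/(\ln n+4)$; that verification is doable numerically but is not ``just dividing and comparing''.

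For the \emph{upper} bound your route is genuinely different and remains a sketch. The paper does \emph{not} build a supersolution. Instead it notices the algebraic identity $\mathbf{H}_n=\tfrac14(\mathbf{B}_n+\mathbf{C}_n)$, where $\mathbf{B}_m,\mathbf{C}_m$ are the two Jacobi blocks that arose in the continuous problem; since $\mathbf{C}_n-\mathbf{B}_n$ is positive definite one gets $\lambda_{\min}(\mathbf{H}_n)>\tfrac12\lambda_{\min}(\mathbf{B}_n)$, and the earlier analysis (Corollary~\ref{c4.2} and Proposition~\ref{p4.3}, which bound $\lambda_{\min}(\mathbf{B}_n)$ by a trace estimate on the characteristic polynomial of $\mathbf{D}_n=\mathbf{B}_n-\tfrac12\mathbf{E}_n$) gives the explicit constants $8$ and $4$ for free. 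Your supersolution/ground-state scheme is a legitimate alternative and the identity you write for $\langle B_nv,v\rangle$ under $v_k=\phi_k u_k$ is correct, but the heart of the matter---choosing $g$ and verifying $(k-1)^2(\phi_k-\phi_{k-1})-k^2(\phi_{k+1}-\phi_k)\ge\mu_n^*\phi_k$ uniformly in $2\le k\le n-1$ together with the two endpoint conditions, with the \emph{exact} constants---is precisely what you flag as ``the main obstacle'' and do not carry out. Until that computation is done, the upper bound is not proved; the paper's route avoids it entirely by recycling the continuous-case eigenvalue estimate.
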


\begin{theorem}\label{t1.1}
The best constant $c_n$ in the Hardy inequality \eqref{e1.3} admits
the estimates
$$
4\Bigg(1-\frac{2}{\ln\frac{n+1}{2}+2}\Bigg)\leq c_n\leq
4\Bigg(1-\frac{8}{(\ln \frac{n+1}{2}+ 4)^2}\Bigg)\,.
$$
\end{theorem}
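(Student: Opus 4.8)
The plan is to turn the Rayleigh quotient for \eqref{e1.3} into an explicit finite eigenvalue problem and then bound that eigenvalue from above and below. For $f\in\HH_n$ write $F(x)=\int_0^x f(t)\,dt=e^{-x/2}p(x)$ with $p\in\PP_n$, $p(0)=0$, put $q(x)=p(x)/x\in\PP_{n-1}$ and $g(x)=x^{1/2}e^{-x/2}q(x)$. Two integrations by parts (legitimate since $F(0)=0$ and everything decays exponentially) give the Hardy remainder identity
\[
4\int_0^\infty f^2\,dx-\int_0^\infty\Big(\frac{F}{x}\Big)^2dx=\int_0^\infty\Big(\frac{F}{x}-2f\Big)^2dx=4\int_0^\infty x\,g'(x)^2\,dx,
\]
together with $\int_0^\infty(F/x)^2dx=\int_0^\infty e^{-x}q^2\,dx$ and $\int_0^\infty f^2\,dx=\tfrac14\int_0^\infty e^{-x}q^2\,dx+\int_0^\infty x\,g'(x)^2\,dx$. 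Hence
\[
c_n=\frac{4}{1+4\mu_n},\qquad \mu_n:=\min_{0\ne q\in\PP_{n-1}}\frac{\displaystyle\int_0^\infty e^{-x}\big(xq'+\tfrac{1-x}{2}q\big)^2dx}{\displaystyle\int_0^\infty e^{-x}q^2\,dx}.
\]
Expanding $q$ in Laguerre polynomials and using $x(L_k^{(0)})'+\tfrac{1-x}{2}L_k^{(0)}=\tfrac{k+1}{2}L_{k+1}^{(0)}-\tfrac{k}{2}L_{k-1}^{(0)}$, one identifies $\mu_n$ with the smallest eigenvalue of a symmetric pentadiagonal matrix that decouples along even and odd indices into two Jacobi matrices with quadratic diagonal and product-type codiagonal entries — the Jacobi matrices promised in the abstract. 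The substitution $x=e^t$ turns $\mu_n$ into a Poincaré constant, $\mu_n=\min\{\|h'\|_{L^2(\mathbb R)}^2/\|h\|_{L^2(\mathbb R)}^2:0\ne h\in V_n\}$ with $V_n=\operatorname{span}\{e^{(k+1/2)t-e^t/2}:0\le k\le n-1\}$. With these reductions Theorem~\ref{t1.1} is equivalent to
\[
\frac{2}{\big(\ln\frac{n+1}{2}+4\big)^2-8}\ \le\ \mu_n\ \le\ \frac{1}{2\ln\frac{n+1}{2}}.
\]

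For the upper bound on $\mu_n$ (i.e.\ the lower bound for $c_n$) I would simply plug one good trial polynomial $q\in\PP_{n-1}$ into the Rayleigh quotient: the associated $h\in V_n$ should be spread over a $t$-interval of length comparable to $\ln n$. Concretely, take $q$ to approximate the non-polynomial extremal profile $x^{-1/2}e^{x/2}$ on an interval $[a,bn]$, so that $h\approx1$ for $\ln a\le t\le\ln(bn)$ and decays at the two ends; the admissible reach $x\lesssim bn$ is governed by elementary estimates for the Taylor remainder of $e^{x/2}$. Evaluating the two Laguerre integrals for this $q$ and doing the bookkeeping with harmonic sums yields $\mu_n\le 1/(2\ln\frac{n+1}{2})$.

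The lower bound on $\mu_n$ (the upper bound for $c_n$) is the main obstacle: it is the genuine Poincaré inequality $\|h\|_{L^2(\mathbb R)}^2\le C_n\|h'\|_{L^2(\mathbb R)}^2$ for all $h\in V_n$, with $C_n=\tfrac12\big((\ln\frac{n+1}{2}+4)^2-8\big)\asymp\tfrac14\ln^2 n$. The mechanism is concentration: for $t$ well beyond $\ln(cn)$ the Gaussian factor $e^{-e^t/2}$ defeats any polynomial growth of $q(e^t)$, while for $t\to-\infty$ the factor $e^{t/2}$ forces decay, so $h$ and $h'$ live, up to small tails, on a $t$-interval of length $\asymp\ln n$, on which a one-dimensional Poincaré/Wirtinger inequality (with $h$ small at the endpoints) applies. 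What makes this hard is getting the \emph{precise} constant $(\ln\frac{n+1}{2}+4)^2-8$: it forces one to discard the two tails with Markov--Nikolskii-type pointwise bounds for polynomials in the Laguerre weight and to choose the comparison interval optimally. A purely matrix-theoretic route is also available: each Jacobi block has negative codiagonal entries, hence its smallest eigenvalue is simple with a strictly positive eigenvector, so it suffices to exhibit a positive vector $v$ with $(\mathcal J-\mu I)v\ge0$ entrywise for the target $\mu$ — a discretization of $\sin(\pi t/\ell)$ with $\ell\asymp\ln n$ is the natural candidate — and the verification again reduces to a differential inequality with controlled error terms. I expect the reduction and the trial-function estimate to be short, and essentially all the work to lie in this last step.
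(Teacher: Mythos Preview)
Your reduction is correct and in fact tidier than the paper's. The Hardy remainder identity $4\int_0^\infty f^2-\int_0^\infty(F/x)^2=4\int_0^\infty x\,g'(x)^2\,dx$ immediately peels off the constant $4$ and gives $c_n=4/(1+4\mu_n)$. The paper reaches the same point in three stages: first $c_n^{-1}=\tfrac12\la_{\min}(\A_n)$ for a pentadiagonal $\A_n$, then the decoupling $\la_{\min}(\A_n)=\la_{\min}(\B_m)$ with $m=\lfloor(n+1)/2\rfloor$, and only then the shift $\D_m=\B_m-\tfrac12\E_m$. Your Laguerre identity $x(L_k)'+\tfrac{1-x}{2}L_k=\tfrac{k+1}{2}L_{k+1}-\tfrac{k}{2}L_{k-1}$ is right and produces the same decoupled Jacobi blocks; in the paper's notation your $\mu_n$ equals $\tfrac12\la_{\min}(\D_m)$, so the two equivalent forms of Theorem~\ref{t1.1} match exactly.

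The gap is that the two inequalities for $\mu_n$ are only sketched, and the paper's actual arguments are quite different from either of your proposed routes. For the \emph{upper} bound on $\mu_n$ (the lower bound on $c_n$) the paper does not build a trial polynomial: it computes the single entry $\alpha_{11}=\d_{m-1}/|\D_m|$ of $\D_m^{-1}$ by solving a first-order linear recurrence for the cofactor determinants $\d_k$, obtaining the closed form $\alpha_{11}=2\sum_{k=0}^{m-1}\big((2k)!!/(2k+1)!!\big)^2>\ln(m+\tfrac12)$, whence $\la_{\min}(\D_m)<1/\ln(m+\tfrac12)$. For the \emph{lower} bound on $\mu_n$ (the upper bound on $c_n$) the paper uses neither a Poincar\'e inequality on a $t$-interval nor a positive supersolution; instead it bounds $1/\la_{\min}(\D_m)$ by the trace $\mathrm{tr}(\D_m^{-1})$, which is the linear coefficient $q_{1m}$ of the normalized characteristic polynomial $Q_m(\la)=1-q_{1m}\la+\cdots$. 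The three-term recurrence for $Q_m$ yields a scalar recurrence $(2k-1)^2y_k=4(k-1)^2y_{k-1}+2$ for the increments $y_k=q_{1k}-q_{1,k-1}$, and a short induction gives $y_k\le(\ln k+4)/(2k)$, so $q_{1m}\le\tfrac14(\ln^2 m+8\ln m+8)$.

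Thus the specific numbers $(\ln\tfrac{n+1}{2}+4)^2$ and $\ln\tfrac{n+1}{2}+2$ in the statement are not universal constants of a Poincar\'e problem but artifacts of summing $(\ln k+4)/(2k)$ and of a Wallis-type estimate, respectively. Your analytic programme would plausibly deliver the correct orders $\mu_n\asymp 1/\ln^2 n$ on both sides, and the trial-vector idea is even morally the same as the paper's $\alpha_{11}$ bound (a diagonal entry of an inverse is a Rayleigh quotient for the inverse). But as written the proposal does not prove the stated inequalities: you would either have to carry out the tail and Poincar\'e estimates with enough precision to hit these exact constants, or---much shorter---switch to the paper's recurrence-based computation once you have the Jacobi matrix in hand.
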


The natural question about the precise asymptotic behaviour of the second term in the above results, in terms of $n$, arises.  

We show that $d_n$ and $c_n$ coincide with the smallest eigenvalues of certain Jacobi matrices and then 
estimate them properly via lower bounds for the smallest zeros of the orthogonal polynomials associates with these matrices.
More precisely, we study the smallest zeros of sequences of polynomials known as the ``birth and death process orthogonal polynomials'' (see \cite{MouradB}).  

Let us fix some notations used in this paper. The notation $\PP_n$ will
be used for the set of algebraic polynomials of degree not exceeding
$n$ with real coefficients. A labeled bold capital
letter (e.g., $\mathbf{M}_m$) stands for a symmetric $m\times m$
matrix with real elements, i.e. a Hermitian matrix. The $m\times m$ identity matrix is
denoted by $\E_m$. By $\la_{\min}(\mathbf{M}_m)$ and
$\la_{\max}(\mathbf{M}_m)$ we mean, respectively, the smallest and
the largest eigenvalues of $\mathbf{M}_m$. 
The paper is organized as follows. In Section~2 we exploit some
properties of the Laguerre polynomials to prove that
$c_n^{-1}=\frac{1}{2}\,\la_{\min}(\A_n)$, where $\A_n$ is a
specific Jacobi matrix. In Section~3
we show that $\la_{\min}(\A_n)=\la_{\min}(\B_m)$, where $\B_m$ is a
Jacobi matrix of size $m=\lfloor\frac{n+1}{2}\rfloor$. In Section~4
we prove that $\D_{m}=\B_m-\frac{1}{2}\E_m$ is positive definite and
then obtain two-sided estimates for $\la_{\min}(\D_m)$.
Theorem~\ref{t1.1} is proved in Section~5. The proof of
Theorem~\ref{t1.2} is given in Section~6. The lower bound for
$\,d_n\,$ is shown by constructing an appropriate sequence, while
for the upper bound we utilize the analysis performed in the proof
of the continuous case.

\section{Hardy inequality \eqref{e1.3} and the smallest eigenvalue
of a matrix}

Throughout this section we use the customary notation
$\{L_k^{(\al)}\}_{k\in \mathbb{N}_0}$ for the Laguerre polynomials,
which are the orthogonal polynomials on $(0,\infty)$ with respect to
the Laguerre weight function $w_{\al}(x)=x^{\al}e^{-x}$, $\al>-1$.
For the sake of simplicity, when $\al=0$ we write $L_k$ instead of
$L_k^{(0)}$.

For easier reference, we collect in the following lemma some
well-known properties of the Laguerre polynomials (see \cite[eqns.
(5.1.7), (5.1.12)--(5.1.14)]{Sze75}).
\begin{lemma}\label{l2.1}
The following are properties of the Laguerre polynomials:
\begin{eqnarray*}
(i)~&& L_n^{(\al)}(0)={n+\al\choose n}\,;\\
(ii)~&& L_n^{(\al+1)}(x)=\sum_{\nu=0}^n L_{\nu}^{(\al)}(x)\,;\\
(iii)~&& L_n^{(\al)}(x)=L_n^{(\al+1)}(x)-L_{n-1}^{(\al+1)}(x)\,;\\
(iv)~&& \frac{d}{dx}\,L_n^{(\al)}(x)=-L_{n-1}^{(\al+1)}(x)
=x^{-1}\big(n\,L_n^{(\al)}(x)-(n+\al)\,L_{n-1}^{(\al)}(x)\big)\,;\\
(v)~&& \int_{0}^{\infty}e^{-x}L_n^2(x)\,dx=1\,.
\end{eqnarray*}
\end{lemma}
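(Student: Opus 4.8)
All five identities are classical; the plan is to deduce them from three standard facts about the Laguerre polynomials, namely the generating function
\begin{equation*}
\sum_{n=0}^{\infty} L_n^{(\alpha)}(x)\,t^n = (1-t)^{-\alpha-1}\exp\!\Big(\frac{-xt}{1-t}\Big),\qquad |t|<1,
\end{equation*}
the explicit expansion $L_n^{(\alpha)}(x)=\sum_{k=0}^{n}(-1)^k\binom{n+\alpha}{n-k}\,\frac{x^k}{k!}$, and the orthogonality relation $\int_0^{\infty}x^{\alpha}e^{-x}L_n^{(\alpha)}(x)L_m^{(\alpha)}(x)\,dx=\frac{\Gamma(n+\alpha+1)}{n!}\,\delta_{nm}$. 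I would verify each item in turn, reusing earlier items where convenient. (Since all of these formulas appear verbatim in \cite[\S5.1]{Sze75}, one may of course simply quote them, which is presumably what the authors intend here.)

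For (i), put $x=0$ in the explicit expansion: only the $k=0$ term survives and it equals $\binom{n+\alpha}{n}$. For (ii), divide the generating function by $1-t$. On one side this produces $(1-t)^{-\alpha-2}\exp(-xt/(1-t))=\sum_{n} L_n^{(\alpha+1)}(x)\,t^n$; on the other side, $\frac{1}{1-t}\sum_n L_n^{(\alpha)}(x)\,t^n=\sum_n\big(\sum_{\nu=0}^{n}L_\nu^{(\alpha)}(x)\big)t^n$. Matching the coefficients of $t^n$ gives the identity. Item (iii) then follows at once by subtracting the instance of (ii) for $n-1$ from the instance for $n$.

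For the first equality in (iv), differentiate the generating function with respect to $x$: the left-hand side becomes $\sum_n \frac{d}{dx}L_n^{(\alpha)}(x)\,t^n$, while the right-hand side becomes $-t(1-t)^{-\alpha-2}\exp(-xt/(1-t))=-\sum_n L_n^{(\alpha+1)}(x)\,t^{n+1}$, so comparing coefficients of $t^n$ yields $\frac{d}{dx}L_n^{(\alpha)}(x)=-L_{n-1}^{(\alpha+1)}(x)$. The second equality in (iv) is then obtained by combining this with the structure relation $x\,\frac{d}{dx}L_n^{(\alpha)}(x)=n\,L_n^{(\alpha)}(x)-(n+\alpha)\,L_{n-1}^{(\alpha)}(x)$, which is proved by differentiating the explicit expansion term-by-term and checking the resulting coefficient identity $(n-k)\binom{n+\alpha}{n-k}=(n+\alpha)\binom{n-1+\alpha}{n-1-k}$. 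Finally, (v) is just the $\alpha=0$ case of the orthogonality relation, in which $\Gamma(n+1)/n!=1$. The whole argument is mechanical; the only spot that takes a moment's care is the coefficient bookkeeping behind the structure relation used in (iv), and even that is routine once one writes the binomial coefficients in terms of the Gamma function.
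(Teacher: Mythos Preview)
Your derivations are correct, and you have anticipated exactly what the paper does: it gives no proof of Lemma~\ref{l2.1} at all, merely citing \cite[eqns.\ (5.1.7), (5.1.12)--(5.1.14)]{Sze75}. So your parenthetical remark is spot on, and the rest of your write-up simply reproduces the standard generating-function/explicit-expansion arguments that underlie those cited formulas; nothing needs to be changed.
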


We proceed with reformulating Hardy inequality \eqref{e1.3} into the
language of linear algebra. Instead of the sharp constant $c_n$ in
\eqref{e1.3}, we shall examine its reciprocal,
\begin{equation}\label{e2.1}
\frac{1}{c_n}=\inf_{f\in\HH_n}\frac{\int_{0}^{\infty}f^2(x)\,dx}
{\int_{0}^{\infty}\big(\frac{1}{x}\int_{0}^{x}f(t)\,dt\big)^2\,dx}\,.
\end{equation}

Let us set
\begin{equation}\label{e2.2}
F(x):=\int\limits_{0}^{x}f(t)\,dt\,,\qquad f\in\HH_n.
\end{equation}
Then $F(x)=e^{-x/2}p_n(x)\,$, where $p_n\in\PP_n$ and $p_n(0)=0$. In
view of Lemma~\ref{l2.1}\,(i), the polynomial $p_n$ can be
represented in the form
$$
p_n(x)=\sum_{k=1}^{n}b_k\,\varphi_k(x),\qquad \text{ where }\ \ \ 
\varphi_k(x):=k\big(L_k(x)-L_{k-1}(x)\big)\,.
$$
Thus,
\begin{equation}\label{e2.3}
F(x)=e^{-x/2}\sum_{k=1}^{n}b_{k}\,k\,\big(L_k(x)-L_{k-1}(x)\big)
\end{equation}
and
$$
f(x)=F^{\prime}(x)=e^{-x/2}\,
\Big(p_n^{\prime}(x)-\frac{1}{2}\,p_n(x)\Big)\,.
$$
Then we have
$$
p^\prime_n(x)=\sum_{k=1}^{n}b_k\,\varphi_k^{\prime}(x)
=\sum_{k=1}^{n}k\,b_k\big(L_k^{\prime}(x)-L_{k-1}^{\prime}(x)\big)
=-\sum_{k=1}^{n}k\,b_k\,L_{k-1}(x)\,,
$$
where, for the last equality we have used Lemma~\ref{l2.1}\,(iv) and
(iii). Hence,
\[
\begin{split}
f(x)&=e^{-x/2}\,\Big(-\sum_{k=1}^{n}k\,b_k\,L_{k-1}(x)
-\frac{1}{2}\,\sum_{k=1}^{n}k\,b_k\,\big(L_k(x)-L_{k-1}(x)\big)\Big)\\
&=-\frac{1}{2}\,e^{-x/2}\sum_{k=1}^{n}k\,b_k\,
\big(L_{k-1}(x)+L_k(x)\big)\,.
\end{split}
\]
Now we apply Lemma~\ref{l2.1}\,(v) to obtain
\begin{equation}\label{e2.4}
\begin{split}
\int\limits_{0}^{\infty}f^2(x)\,dx=&\frac{1}{4}\int\limits_{0}^{\infty}
e^{-x}\Big[\sum_{k=1}^{n}k^2b_k^2\big(L_k^2(x)+L_{k-1}^2(x)\big)\\
&
+2\sum_{k=1}^{n-1}k(k+1)b_kb_{k+1}L_k^2(x)\Big]\,dx\\
=&\frac{1}{2}\Big[\sum_{k=1}^{n}k^2b_k^2+\sum_{k=1}^{n-1}k(k+1)b_kb_{k+1}\Big]\,.
\end{split}
\end{equation}
Next, from \eqref{e2.2}, \eqref{e2.3} and
Lemma~\ref{l2.1}\,(iv),\,(ii) and (v) we deduce
\[
\begin{split}
\int\limits_{0}^{\infty}\Bigg(\frac{1}{x}\int\limits_{0}^{x}f(t)\,dt\Bigg)^2\,dx
&=\int\limits_{0}^{\infty}e^{-x}\,\Bigg[\sum_{k=1}^{n}b_k\,k\,
\frac{L_k(x)-L_{k-1}(x)}{x}\Bigg]^2\,dx\\
&=\int\limits_{0}^{\infty}e^{-x}
\Bigg[-\sum_{k=1}^{n}b_k\,L_{k-1}^{(1)}(x)\Bigg]^2\,dx\\
&=\int\limits_{0}^{\infty}e^{-x}
\Bigg[\sum_{k=1}^{n}b_k\,\sum_{j=0}^{k-1}L_j(x)\Bigg]^2\,dx\\
&=\int\limits_{0}^{\infty}e^{-x}
\Bigg[\sum_{j=0}^{n-1}\Bigg(\sum_{k=j+1}^{n}b_{k}\Bigg)L_j(x)\Bigg]^2\,dx\\
&=\sum_{j=0}^{n-1}\Bigg(\sum_{k=j+1}^{n}b_{k}\Bigg)^2\,.
\end{split}
\]
The latter representation, after the substitution
$$
b_k=t_k-t_{k+1},\quad k=1,\ldots,n\,,\quad t_{n+1}:=0\,,
$$
simplifies to
\begin{equation}\label{e2.5}
\int\limits_{0}^{\infty}\Bigg(\frac{1}{x}\int\limits_{0}^{x}f(t)\,dt\Bigg)^2\,dx
=\sum_{k=1}^{n}t_k^2\,,
\end{equation}
while \eqref{e2.4} becomes
$$
\int\limits_{0}^{\infty}f^2(x)\,dx=
\frac{1}{2}\Big[\sum_{k=1}^{n}k^2\big(t_k-t_{k+1}\big)^2
+\sum_{k=1}^{n-1}k(k+1)\big(t_k-t_{k+1}\big)\big(t_{k+1}-t_{k+2}\big)\Big].
$$
After evaluation of the coefficients of the quadratic form in the
right-hand side we find
\begin{equation}\label{e2.6}
\int\limits_{0}^{\infty}f^2(x)\,dx=\frac{1}{2}\, \mathbf{t}^{\intercal} \A_n \mathbf{t}\,,
\end{equation}
where $\mathbf{t}=(t_1,t_2,\ldots,t_n)^{\intercal}$ and $\A_n=\{a_{ij}\}$
is the $n\times n$ Jacobi matrix with elements
\begin{equation}\label{e2.7}
\begin{array}{lll}
&a_{kk}=k^2-k+1, & 1\leq k\leq n\,,\\
&a_{k,k+2}=a_{k+2,k}=-k(k+1),& 1\leq k\leq n-2\,,\\
&a_{jk}=0,  & |j-k|\ne 0,\,2, \quad  1\leq j,\,k\leq n\,.
\end{array}
\end{equation}
From \eqref{e2.5}, \eqref{e2.6} and the Rayleigh-Ritz theorem we
conclude that
$$
\inf_{f\in\HH_n}\frac{\int\limits_{0}^{\infty}f^2(x)\,dx}
{\int\limits_{0}^{\infty}
\big(\frac{1}{x}\int\limits_{0}^{x}f(t)\,dt\big)^2\,dx}
=\frac{1}{2}\inf_{\mathbf{t}\in
\mathbb{R}^n}\frac{\mathbf{t}^{\intercal}\A_n\mathbf{t}}
{\mathbf{t}^{\intercal}\mathbf{t}}=\frac{1}{2}\,\la_{\min}(\A_n)\,.
$$
Comparison of this result with \eqref{e2.1} shows that we have
proved the following statement:
\begin{proposition}\label{p2.2}
The best constant $c_n$ in the Hardy inequality \eqref{e1.3}
satisfies
$$
c_n^{-1}=\frac{1}{2}\,\la_{\min}(\A_n)\,,
$$
where $\la_{\min}(\A_n)$ is the smallest eigenvalue of the $n\times
n$ matrix $\A_n$ with elements given by \eqref{e2.7}.
\end{proposition}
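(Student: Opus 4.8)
The plan is to turn the variational quantity \eqref{e2.1} into a finite-dimensional eigenvalue problem. First I would use \eqref{e2.2} to describe $\HH_n$ by its primitives: every $f\in\HH_n$ has $F(x)=\int_0^x f=e^{-x/2}p_n(x)$ with $p_n\in\PP_n$, $p_n(0)=0$. Since $\varphi_k(0)=k\big(L_k(0)-L_{k-1}(0)\big)=0$ by Lemma~\ref{l2.1}\,(i) and $\deg\varphi_k=k$, the functions $\varphi_1,\dots,\varphi_n$ are a (triangular) basis of the $n$-dimensional space $\{p\in\PP_n:p(0)=0\}$. Hence $p_n=\sum_{k=1}^n b_k\varphi_k$, and $\mathbf{b}=(b_1,\dots,b_n)^{\intercal}$ ranges over all of $\mathbb{R}^n$ as $f$ ranges over $\HH_n$.

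Next I would express both integrals in \eqref{e1.3} as explicit quadratic forms in $\mathbf{b}$. Differentiating $F$ and using $f=F'=e^{-x/2}\big(p_n'-\tfrac12 p_n\big)$ together with the contiguous and derivative relations in Lemma~\ref{l2.1}\,(ii)--(iv) rewrites $f$ and $\tfrac1x\int_0^x f$ as short linear combinations of the Laguerre polynomials $L_j$; since $\{L_j\}$ is orthonormal for $e^{-x}dx$ by Lemma~\ref{l2.1}\,(v), integrating term by term gives $\int_0^\infty\big(\tfrac1x\int_0^x f\big)^2dx=\sum_{j=0}^{n-1}\big(\sum_{k=j+1}^n b_k\big)^2$ and $\int_0^\infty f^2\,dx=\tfrac12\big[\sum_{k=1}^n k^2 b_k^2+\sum_{k=1}^{n-1}k(k+1)b_kb_{k+1}\big]$. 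The telescoping substitution $b_k=t_k-t_{k+1}$ with $t_{n+1}=0$ is a linear bijection of $\mathbb{R}^n$ onto itself (with inverse $t_k=\sum_{j\ge k}b_j$), so the infimum is unchanged if taken over $\mathbf{t}$; under it the denominator collapses to $\mathbf{t}^{\intercal}\mathbf{t}$, and collecting the coefficients of the numerator yields $\tfrac12\,\mathbf{t}^{\intercal}\A_n\mathbf{t}$ with $\A_n$ the symmetric matrix from \eqref{e2.7}.

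Finally, the Rayleigh--Ritz characterisation gives $\inf_{\mathbf{t}\ne 0}\mathbf{t}^{\intercal}\A_n\mathbf{t}/\mathbf{t}^{\intercal}\mathbf{t}=\la_{\min}(\A_n)$, and combining this with \eqref{e2.1} and \eqref{e2.5}, \eqref{e2.6} yields $c_n^{-1}=\tfrac12\,\la_{\min}(\A_n)$.

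The routine but error-prone part is the bookkeeping in the middle step: one must track carefully which Laguerre index survives after each application of $L_n^{(\al)}=L_n^{(\al+1)}-L_{n-1}^{(\al+1)}$, $\frac{d}{dx}L_n^{(\al)}=-L_{n-1}^{(\al+1)}$ and $L_n^{(\al+1)}=\sum_{\nu\le n}L_\nu^{(\al)}$, absorb the $-\tfrac12$ shift coming from $f=e^{-x/2}(p_n'-\tfrac12 p_n)$, and then reindex the double sum $\sum_k b_k\sum_{j<k}L_j=\sum_j\big(\sum_{k>j}b_k\big)L_j$ before squaring and integrating. The only genuinely conceptual checks are that $\{\varphi_k\}_{k=1}^n$ spans $\{p\in\PP_n:p(0)=0\}$ and that $\mathbf{b}\mapsto\mathbf{t}$ is invertible, both of which are needed for the infimum over $\mathbf{t}$ to equal the infimum over $f\in\HH_n$; the rest is computation with an orthonormal basis.
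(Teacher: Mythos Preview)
Your proposal is correct and follows essentially the same route as the paper: expand $F=e^{-x/2}p_n$ in the basis $\varphi_k=k(L_k-L_{k-1})$, use the Laguerre identities of Lemma~\ref{l2.1} to reduce both integrals to the explicit quadratic forms $\tfrac12\big[\sum k^2b_k^2+\sum k(k+1)b_kb_{k+1}\big]$ and $\sum_j\big(\sum_{k>j}b_k\big)^2$, then apply the bijective substitution $b_k=t_k-t_{k+1}$ and Rayleigh--Ritz. The only addition you make beyond the paper is the explicit check that $\{\varphi_k\}$ is a basis and that $\mathbf{b}\mapsto\mathbf{t}$ is invertible, which the paper leaves implicit.
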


\section{$\la_{\min}(\A_n)$ as the smallest eigenvalue of a Jacobi matrix $\B_m$}

In view of Proposition~\ref{p2.2}, we need to find (or estimate) the
smallest eigenvalue of $\A_n$. By rearrangement of the rows and
columns of $\A_n$ one readily realizes that the characteristic
polynomial of $\A_n$, $P(\la)=|\la\E_n-\A_n|$ is represented as a
product of the characteristic polynomials of two Jacobi matrices.
Namely, we have
$$
P(\la)=|\la\E_n-\A_n|=\Big|\la\E_{\lfloor\frac{n+1}{2}\rfloor}-
\B_{\lfloor\frac{n+1}{2}\rfloor}\Big|.\Big|\la\E_{\lfloor\frac{n}{2}\rfloor}-
\C_{\lfloor\frac{n}{2}\rfloor}\Big|\,,
$$
where the $m\times m$ matrix $\B_m=\{b_{ij}\}$ has elements
\begin{equation}\label{e3.1}
\begin{array}{lll}
&b_{kk}=4k^2-6k+3, & 1\leq k\leq m\,,\\
&b_{k,k+1}=b_{k+1,k}=-k(2k-1),& 1\leq k\leq m-1\,,\\
&b_{jk}=0,  & |j-k|>1,\quad  1\leq j,\,k\leq m\,,
\end{array}
\end{equation}
and the $m\times m$ matrix $\C_m=\{c_{ij}\}$ has elements
\begin{equation}\label{e3.2}
\begin{array}{lll}
&c_{kk}=4k^2-2k+1, & 1\leq k\leq m\,,\\
&c_{k,k+1}=c_{k+1,k}=-k(2k+1),& 1\leq k\leq m-1\,,\\
&c_{jk}=0,  & |j-k|>1,\quad  1\leq j,\,k\leq m\,.
\end{array}
\end{equation}

Now we prove the following statement:
\begin{proposition}\label{p3.1}
There holds
$$
\la_{\min}(\A_n)=\la_{\min}\big(\B_{\lfloor\frac{n+1}{2}\rfloor}\big)\,.
$$
\end{proposition}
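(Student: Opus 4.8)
The plan is to build on the factorization of the characteristic polynomial $P(\la)=|\la\E_n-\A_n|$ displayed above. Put $m=\lfloor\frac{n+1}{2}\rfloor$ and $m'=\lfloor\frac{n}{2}\rfloor$, so that $m'\le m\le m'+1$. Since $\B_m$ and $\C_{m'}$ are symmetric matrices, the identity $P(\la)=|\la\E_m-\B_m|\,|\la\E_{m'}-\C_{m'}|$ says precisely that the spectrum of $\A_n$, counted with multiplicities, is the union of the spectra of $\B_m$ and $\C_{m'}$. Hence
$$
\la_{\min}(\A_n)=\min\big\{\la_{\min}(\B_m),\,\la_{\min}(\C_{m'})\big\}\,,
$$
and the whole task reduces to showing that this minimum is always the first entry, i.e. $\la_{\min}(\B_m)\le\la_{\min}(\C_{m'})$. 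I would establish this in two short steps.

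The first step is to show that $\C_m-\B_m$ is positive definite. Subtracting the defining relations \eqref{e3.1} and \eqref{e3.2} gives $\C_m-\B_m=2\,\G_m$, where $\G_m$ is the symmetric tridiagonal $m\times m$ matrix with diagonal entries $2k-1$ and off-diagonal entries $-k$. One checks directly the explicit decomposition
$$
\G_m=\sum_{k=1}^{m-1}k\,(\mathbf{e}_k-\mathbf{e}_{k+1})(\mathbf{e}_k-\mathbf{e}_{k+1})^{\intercal}+m\,\mathbf{e}_m\mathbf{e}_m^{\intercal}\,,
$$
where $\mathbf{e}_1,\dots,\mathbf{e}_m$ denotes the standard basis of $\mathbb{R}^m$; this exhibits $\G_m$ as a sum of positive semidefinite rank-one matrices built from the vectors $\mathbf{e}_1-\mathbf{e}_2,\dots,\mathbf{e}_{m-1}-\mathbf{e}_m$ and $\mathbf{e}_m$, which together span $\mathbb{R}^m$, so $\G_m$, and hence $\C_m-\B_m$, is positive definite. (Equivalently, the leading principal minors $D_k$ of $\G_m$ satisfy $D_0=D_1=1$, $D_k=(2k-1)D_{k-1}-(k-1)^2D_{k-2}$, whence $D_k=k!>0$ by a one-line induction.) In particular $\mathbf{v}^{\intercal}\B_m\mathbf{v}\le\mathbf{v}^{\intercal}\C_m\mathbf{v}$ for every $\mathbf{v}\in\mathbb{R}^m$, so the Rayleigh-Ritz characterization of the smallest eigenvalue yields $\la_{\min}(\B_m)\le\la_{\min}(\C_m)$.

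The second step is an application of Cauchy's interlacing theorem: $\C_{m'}$ is a leading principal submatrix of $\C_m$ (trivially so when $m'=m$), so $\la_{\min}(\C_m)\le\la_{\min}(\C_{m'})$. Combining this with the previous inequality gives $\la_{\min}(\B_m)\le\la_{\min}(\C_{m'})$, and therefore $\la_{\min}(\A_n)=\la_{\min}(\B_m)=\la_{\min}\big(\B_{\lfloor\frac{n+1}{2}\rfloor}\big)$, as asserted.

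I do not foresee a genuine obstacle; the only step that requires some care is the positive definiteness of $\C_m-\B_m$. This is not apparent from the entries: $\C_m$ dominates $\B_m$ both on the diagonal and, in absolute value, on the off-diagonal, and since enlarging the off-diagonal part of a Jacobi matrix tends to push $\la_{\min}$ down, a superficial comparison would even suggest the reverse inequality. The content of the argument is precisely that the diagonal gain outweighs the off-diagonal loss, and the explicit rank-one decomposition above (or the minor evaluation $D_k=k!$) is what makes this transparent while keeping the proof brief.
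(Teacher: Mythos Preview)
Your argument is correct and follows essentially the same route as the paper: prove that $\G_m=\tfrac12(\C_m-\B_m)$ is positive definite, then use an interlacing argument to handle the case $m'\ne m$. The only differences are cosmetic. First, the paper establishes positive definiteness of $\G_m$ via Sylvester's criterion and the inductive evaluation $|\G_k|=k!$ (which you also mention); your explicit rank-one decomposition
$$
\G_m=\sum_{k=1}^{m-1}k\,(\mathbf{e}_k-\mathbf{e}_{k+1})(\mathbf{e}_k-\mathbf{e}_{k+1})^{\intercal}+m\,\mathbf{e}_m\mathbf{e}_m^{\intercal}
$$
is a nice alternative that makes the definiteness visible without any computation. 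Second, for odd $n$ the paper applies interlacing on the $\B$-side (comparing $\B_{m}$ with its principal submatrix $\B_{m-1}$ and then with $\C_{m-1}$), whereas you apply it on the $\C$-side (comparing $\C_{m'}$ with $\C_m$ and then with $\B_m$); both chains of inequalities reach the same conclusion $\la_{\min}(\B_m)\le\la_{\min}(\C_{m'})$.
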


\begin{proof}
We consider separately two cases.

\emph{Case 1: $n=2m$}. It follows from \eqref{e3.1} and \eqref{e3.2}
that $\C_m-\B_m=2\G_m$, where $\G_m=\{g_{ij}\}$ is a symmetric
tri-diagonal $m\times m$ matrix with non-zero elements
$g_{kk}=2k-1$, $k=1,\ldots,m$ and $g_{k,k+1}=g_{k+1,k}=-k$,
$\,k=1,\ldots, m-1$\,. We shall show by induction that $|\G_k|=k!$
for every $k\in \mathbb{N}$. Indeed, we have $|\G_1|=1$ and
$|\G_2|=2$, and the induction passage from $k$ to $k+1$ easily
follows from the recurrence relation
$|\G_{k+1}|=(2k+1)\,|\G_k|-k^2\,|\G_{k-1}|$, $\,k\geq 2$. By
Sylvester's criterion we conclude that the matrix $\G_m$ is positive
definite, i.e., for every vector
$\mathbf{x}=(x_1,x_2,\ldots,x_m)^{\intercal}\in \mathbb{R}^m$,
$\mathbf{x}\ne \mathbf{0}$, there holds $\mathbf{x}^{\intercal}
\G_m\mathbf{x}>0$, or, equivalently, $\mathbf{x}^{\intercal}
\mathbf{C}_m\mathbf{x}>\mathbf{x}^{\intercal} \mathbf{B}_m\mathbf{x}$. By
the Rayleigh--Ritz theorem we conclude that
$$
\frac{\mathbf{x}^{\intercal}
\C_m\mathbf{x}}{\mathbf{x}^{\intercal}\mathbf{x}}>
\frac{\mathbf{x}^{\intercal}
\B_m\mathbf{x}}{\mathbf{x}^{\intercal}\mathbf{x}}\geq \inf_{\mathbf{z}\in
\mathbb{R}^m}\frac{\mathbf{z}^{\intercal}
\B_m\mathbf{z}}{\mathbf{z}^{\intercal}\mathbf{z}}= \la_{\min}(\B_m)\,.
$$
Hence,
\begin{equation}\label{e3.3}
\la_{\min}(\C_m)>\la_{\min}(\B_m),
\end{equation}
and consequently
$$
\la_{\min}(\A_n)=\min\big\{\la_{\min}(\B_m),\la_{\min}(\C_m)\big\}
=\la_{\min}(\B_m)\,.
$$

\emph{Case 2: $n=2m+1$}. Since $\B_m$ and $\B_{m+1}$ are embedded
Jacobi matrices, their eigenvalues are zeros of consecutive
orthogonal polynomials, which therefore interlace. Hence,
$$
\la_{\min}(\B_{m+1})<\la_{\min}(\B_m)<\la_{\min}(\C_m)\,,
$$
where the last inequality follows from \emph{Case 1}. Therefore,
$$
\la_{\min}(\A_n)=\min\big\{\la_{\min}(\B_{m+1}),\la_{\min}(\C_m)\big\}
=\la_{\min}(\B_{m+1})\,.
$$
The proof of Proposition~\ref{p3.1} is complete.
\end{proof}

\section{Two-sided estimates for $\la_{\min}(\B_m)$}

In view of Propositions~\ref{p2.2} and \ref{p3.1}, we need estimates
for the smallest eigenvalue $\la_{\min}(\B_m)$ of the matrix $\B_m$. In fact, first we prove in Proposition \ref{p4.1} that $\D_m=\B_m-\frac{1}{2}\,\E_m$ is positive definite and then 
estimate the smallest eigenvalue $\la_{\min}(\D_m)$ of the matrix 
$\D_m$. In order to do this we adopt a relatively straightforward strategy which, however requires to overcome some technical difficulties.  

 Since $\D_m$ are Jacobi matrices, as it is well known (see \cite{MouradB}), we may associate with them a sequence 
of algebraic polynomials $\{d_m\}$ which are orthonormal with respect to a Borel measure, such that the eigenvalues 
of $\D_m$ coincide with the zeros of $d_m(x)$. Then we perform two renormalisations of these polynomials obtaining the nearly monic ones 
which are denoted by $P_m$ and are normalised in such a way that the leading coefficient of $P_m$ is $(-1)^m$. Finally we normalise them once again to 
obtain the sequence of polynomials $Q_m$ which are orthogonal in (a subset of) $(0,\infty)$ and are normalised by  $Q_m(0)=1$. Any sequence of orthogonal polynomials with 
such an orthogonal property and a normalisation are called the ``birth and death process polynomials''. Some results concerning zeros of these families of polynomials were obtained by 
M. E. H. Ismail \cite{Mourad87, Mourad89}.  However, since we need sharp estimates only for the smallest zeros of $Q_m(\lambda)$, we use the fact that it is larger than the point of 
intersection of the line, tangent to the graph of $Q_m(\lambda)$ at $\lambda=0$, and the real line. A bit more sophisticated observation is used to obtain the upper bound for the smallest zero
of  $Q_m(\lambda)$.     
\begin{proposition}\label{p4.1}
The matrix $\D_m=\B_m-\frac{1}{2}\,\E_m$ is positive definite.
\end{proposition}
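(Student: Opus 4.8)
The plan is to apply Sylvester's criterion and show that every leading principal minor of $\D_m$ is positive by writing down a closed form for it. Denote by $\D_k$ the $k\times k$ leading principal submatrix of $\D_m$ and put $\Delta_k:=\det\D_k$, with $\Delta_0:=1$. Since $\D_m$ is tridiagonal with diagonal entries $d_{kk}=b_{kk}-\tfrac12=4k^2-6k+\tfrac52$ and off-diagonal entries $d_{k-1,k}=d_{k,k-1}=-(k-1)(2k-3)$, expanding $\det\D_k$ along its last row yields the three-term recurrence
\[
\Delta_k = \Bigl(4k^2 - 6k + \tfrac{5}{2}\Bigr)\Delta_{k-1} - (k-1)^2(2k-3)^2\,\Delta_{k-2}, \qquad k \ge 2,
\]
together with $\Delta_1 = \tfrac12$.

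Computing the first few values ($\Delta_1=\tfrac12$, $\Delta_2=\tfrac94$, $\Delta_3=\tfrac{225}{8}$, $\Delta_4=\tfrac{11025}{16}$) suggests the closed form
\[
\Delta_k = \frac{\bigl[(2k-1)!!\bigr]^2}{2^k}\,,
\]
which I would establish by induction on $k$. Substituting this ansatz into the recurrence and dividing through by $[(2k-5)!!]^2/2^{k-2}$ and then by $(2k-3)^2$, the induction step collapses to the elementary identity
\[
\frac{(2k-1)^2}{4} = \frac{1}{2}\Bigl(4k^2 - 6k + \tfrac{5}{2}\Bigr) - (k-1)^2 = k^2 - k + \frac{1}{4}\,,
\]
which is immediate. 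Since $\Delta_k>0$ for all $k=1,\dots,m$, Sylvester's criterion gives that $\D_m$ is positive definite.

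I do not anticipate a real obstacle here: the only non-mechanical point is guessing the closed form $[(2k-1)!!]^2/2^k$ for the minors, after which the verification is routine algebra, and this explicit formula may be convenient later when passing to the associated orthogonal polynomials. As a remark, a softer argument is also available: by \eqref{e2.5}, \eqref{e2.6} and Proposition~\ref{p3.1}, positive definiteness of $\D_m$ is equivalent to the strict inequality $\int_0^\infty\bigl(\tfrac1x\int_0^x f(t)\,dt\bigr)^2dx < 4\int_0^\infty f^2(x)\,dx$ for every $f\in\HH_n$, $f\not\equiv 0$, which follows from the classical Hardy inequality \eqref{e1.2} together with the absence of nontrivial extremizers in it. I prefer the explicit computation because it is self-contained.
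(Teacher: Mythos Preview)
Your proof is correct and essentially identical to the paper's: both set up the tridiagonal recurrence for the leading principal minors, prove by induction the closed form $\Delta_k=\bigl[(2k-1)!!\bigr]^2/2^k$, and conclude via Sylvester's criterion. Your added remark deriving positive definiteness from the strict classical Hardy inequality via Propositions~\ref{p2.2} and~\ref{p3.1} is also valid, though the paper does not include it.
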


\begin{proof}
From \eqref{e3.1} we find that the elements $d_{ij}$ of $\D_m$ are
given by
\begin{equation}\label{e4.1}
\begin{array}{lll}
&d_{kk}=4k^2-6k+\frac{5}{2}, & 1\leq k\leq m\,,\\
&d_{k,k+1}=d_{k+1,k}=-k(2k-1),& 1\leq k\leq m-1\,,\\
&d_{jk}=0,  & |j-k|>1,\quad 1\leq j,\,k\leq m\,.
\end{array}
\end{equation}
On using \eqref{e4.1}, one readily obtains the recurrence relation
\begin{equation}\label{e4.2}
|\D_m|=\Big(4m^2-6m+\frac{5}{2}\Big)\,|\D_{m-1}|-(m-1)^2(2m-3)^2\,|\D_{m-2}|\,,
\qquad m\geq 3\,.
\end{equation}

We shall show that
\begin{equation} \label{e4.3}
|\D_m| = \frac{[(2m-1)!!]^2}{2^m},\qquad m\in \mathbb{N},
\end{equation}
whence the claim of Proposition~\ref{p4.1} would follow from
Sylvester's criterion. The proof of \eqref{e4.3} is by induction
with respect to $m$. It is easily verified that \eqref{e4.3} is true
for $m=1,\,2$. Assume that \eqref{e4.3} is true for all natural
numbers $1,2,\ldots,m-1$. Then, using \eqref{e4.2} and the
inductional hypothesis, we accomplish the induction step as follows:
\begin{align*}
|\D_m| & = \Big(4m^2-6m+\frac{5}{2}\Big)\frac{[(2m-3)!!]^2}{2^{m-1}}
- (m-1)^2(2m-3)^2\,\frac{[(2m-5)!!]^2}{2^{m-2}} \\
& = \frac{[(2m-3)!!]^2}{2^m}\Big(2\Big(4m^2-6m+\frac52\Big)-4(m-1)^2\Big) \\
& = \frac{[(2m-1)!!]^2}{2^m}\,.
\end{align*}
The proof of Proposition~\ref{p4.1} is complete.
\end{proof}

\begin{corollary}\label{c4.2}
For every $m\in\mathbb{N}$, there holds
$$
\la_{\min}(\B_m)=\frac{1}{2}+\la_{\min}(\D_m)>\frac{1}{2}\,.
$$
\end{corollary}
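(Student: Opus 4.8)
The plan is to derive this statement directly from Proposition~\ref{p4.1} together with the elementary spectral behaviour of a scalar shift. First I would recall that $\D_m$ is defined as $\B_m - \frac{1}{2}\,\E_m$, so that $\B_m = \D_m + \frac{1}{2}\,\E_m$. Since adding a scalar multiple of the identity to a symmetric matrix simply translates its entire spectrum by that scalar, the eigenvalues of $\B_m$ are precisely those of $\D_m$ increased by $\frac{1}{2}$; in particular the smallest one satisfies $\la_{\min}(\B_m) = \la_{\min}(\D_m) + \frac{1}{2}$. I would justify this either by the Rayleigh--Ritz characterisation, noting that for every $\mathbf{x}\in\mathbb{R}^m$ we have $\mathbf{x}^{\intercal}\B_m\mathbf{x} = \mathbf{x}^{\intercal}\D_m\mathbf{x} + \frac{1}{2}\,\mathbf{x}^{\intercal}\mathbf{x}$, whence dividing by $\mathbf{x}^{\intercal}\mathbf{x}$ and taking the infimum yields the claimed identity, or simply by observing that $\B_m$ and $\D_m$ share the same eigenvectors.

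Next I would invoke Proposition~\ref{p4.1}, which asserts that $\D_m$ is positive definite. Positive definiteness of a symmetric matrix is equivalent to all of its eigenvalues being strictly positive, so in particular $\la_{\min}(\D_m) > 0$. Combining this with the identity $\la_{\min}(\B_m) = \frac{1}{2} + \la_{\min}(\D_m)$ established above immediately gives $\la_{\min}(\B_m) > \frac{1}{2}$, which is exactly the assertion of the corollary.

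There is no real obstacle here: the corollary is a formal consequence of Proposition~\ref{p4.1} and the translation property of eigenvalues under a scalar shift, and the argument is a couple of lines. The only point worth stating explicitly, so that the later sections can reuse it cleanly, is the identity $\la_{\min}(\B_m) = \frac{1}{2} + \la_{\min}(\D_m)$ itself, since it reduces all subsequent estimates for $\la_{\min}(\B_m)$ (and hence, via Propositions~\ref{p2.2} and \ref{p3.1}, for $c_n$) to estimating $\la_{\min}(\D_m)$, which is the object handled by the birth-and-death-process polynomial machinery announced before Proposition~\ref{p4.1}.
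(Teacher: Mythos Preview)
Your argument is correct and is exactly the intended one: the paper states Corollary~\ref{c4.2} immediately after Proposition~\ref{p4.1} without a separate proof precisely because it follows at once from the positive definiteness of $\D_m$ and the fact that $\B_m=\D_m+\frac{1}{2}\E_m$ shifts the spectrum by $\frac{1}{2}$. There is nothing to add or change.
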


\subsection{Estimating $\la_{\min}(\D_m)$ from below}

Here we prove the following statement.
\begin{proposition}\label{p4.3}
For every $m\in \mathbb{N}$ the smallest eigenvalue
$\la_{\min}(\D_m)$ of the matrix $\D_m$ with elements given by
\eqref{e4.1} satisfies
$$
\la_{\min}(\D_m)>\frac{4}{\ln^2 m+8\ln m+8}\,.
$$
\end{proposition}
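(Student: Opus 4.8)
The plan is to bound $\la_{\min}(\D_m)$ from below by exploiting the link with the ``birth and death process'' orthogonal polynomials announced before Proposition~\ref{p4.1}. First I would introduce the monic-type polynomials $P_m(\la)$ whose zeros are the eigenvalues of $\D_m$: from the recurrence \eqref{e4.2} for the principal minors $|\D_k|$ one reads off the three-term recurrence satisfied by the characteristic polynomials $|\la\E_k-\D_k|$, and then rescale to obtain a family $Q_m$ normalised by $Q_m(0)=1$ that is orthogonal on a subset of $(0,\infty)$. Because $\D_m$ is positive definite (Proposition~\ref{p4.1}), all zeros of $Q_m$ are positive, and in particular $Q_m$ is positive and decreasing on $[0,\la_{\min}(\D_m)]$, convex enough near $0$ that the smallest zero lies to the right of the $\la$-intercept of the tangent line to $Q_m$ at $\la=0$. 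This gives the clean lower bound
\begin{equation*}
\la_{\min}(\D_m) \;>\; \frac{Q_m(0)}{-Q_m'(0)} \;=\; \frac{-1}{Q_m'(0)/Q_m(0)}\,,
\end{equation*}
so everything reduces to estimating the logarithmic derivative $-Q_m'(0)/Q_m(0)$ from above.

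Next I would compute $Q_m'(0)/Q_m(0)$ explicitly. Differentiating the three-term recurrence for $Q_m$ at $\la=0$ produces a first-order recurrence for the ratio $r_m:=Q_m'(0)/Q_m(0)$ (or equivalently for $-Q_m'(0)/Q_m(0)$), whose solution is a telescoping sum. Concretely, one expects $-Q_m'(0)/Q_m(0)$ to come out as a sum of the form $\sum_{k=1}^{m} \text{(something like)} \frac{1}{a_k a_{k+1}}$ times the squared partial products of the off-diagonal entries $k(2k-1)$, which after the normalisations collapses to an expression comparable to $\sum_{k=1}^{m} \frac{1}{2k-1}$ or $\tfrac14(\ln m + O(1))$. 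The key identity \eqref{e4.3}, $|\D_k| = [(2k-1)!!]^2/2^k$, is exactly the kind of closed form that makes these products telescope, so I would lean on it heavily. The target is an upper bound of the shape
\begin{equation*}
-\frac{Q_m'(0)}{Q_m(0)} \;\leq\; \frac{\ln^2 m + 8\ln m + 8}{4}\,,
\end{equation*}
which, combined with the tangent-line inequality above, yields precisely the claimed bound $\la_{\min}(\D_m) > 4/(\ln^2 m + 8\ln m + 8)$.

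The main obstacle I anticipate is twofold. First, carrying out the renormalisations correctly: the matrix $\D_m$ is \emph{not} symmetric in the form that gives a clean symmetric Jacobi recurrence with the standard birth-and-death parametrisation, so one must track the scaling factors carefully through the substitution chain $d_m \leadsto P_m \leadsto Q_m$, and a single misplaced factor propagates into the final constant. Second, and more seriously, the bound has a quadratic-in-$\ln m$ denominator rather than linear, which means the crude ``tangent line at $0$'' estimate using only $Q_m'(0)$ is presumably \emph{not} enough by itself — one likely needs a sharper convexity/monotonicity argument near the origin (for instance controlling $Q_m''(0)$, or comparing $Q_m$ with an auxiliary comparison polynomial) to squeeze the $\ln^2 m$ into the denominator. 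Pinning down the precise elementary inequality that converts the telescoping sum into the stated $\tfrac14(\ln m+4)^2$ form — presumably via $\sum_{k=1}^m \frac{1}{2k-1} < \tfrac12\ln m + 1$ or a similar sharp estimate, then squaring — is where the real work lies; the rest is bookkeeping built on \eqref{e4.2}, \eqref{e4.3}, and Corollary~\ref{c4.2}.
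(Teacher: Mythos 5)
Your overall strategy is exactly the one the paper uses: normalise the characteristic polynomial to $Q_m$ with $Q_m(0)=1$, note that all zeros are positive, and bound $\la_{\min}(\D_m)$ from below by the intercept of the tangent line at $\la=0$, i.e.\ by $-Q_m(0)/Q_m'(0)=1/q_{1m}$. (Algebraically this is just $\la_{\min}(\D_m)^{-1}=\la_{\max}(\D_m^{-1})\leq \sum_i \la_i^{-1}=q_{1m}=\mathrm{tr}(\D_m^{-1})$, which is how the paper phrases it.) So the frame is right. But there is a genuine gap in the middle: you never actually establish the bound $-Q_m'(0)\leq \tfrac14(\ln^2 m+8\ln m+8)$, and your heuristic for what $-Q_m'(0)$ should look like is wrong. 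Differentiating the three-term recurrence at $\la=0$ does \emph{not} produce a sum that "collapses to an expression comparable to $\sum_k \tfrac{1}{2k-1}$ or $\tfrac14(\ln m+O(1))$". What it produces is a second-order recurrence for $q_{1m}$ whose increments $y_k=q_{1k}-q_{1,k-1}$ satisfy the first-order recurrence $(2k-1)^2y_k=4(k-1)^2y_{k-1}+2$, $y_1=2$. This has no clean closed form; the paper proves by induction (Lemma~\ref{l4.4}) that $\tfrac{\ln k}{2k}<y_k\leq \tfrac{\ln k+4}{2k}$, and then an integral comparison gives $q_{1m}\leq \tfrac14(\ln^2 m+8\ln m+8)$. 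In particular $q_{1m}=\mathrm{tr}(\D_m^{-1})$ grows like $\tfrac14\ln^2 m$, not like $\tfrac12\ln m$. That inductive estimate of the increments is the real content of the proof, and it is missing from your sketch.

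This same miscalibration causes your closing worry to be a misdiagnosis: you suspect the first-order tangent bound cannot produce a $\ln^2 m$ in the denominator and that one needs $Q_m''(0)$ or a comparison polynomial. It can and one does not: since $-Q_m'(0)=q_{1m}$ is itself of order $\ln^2 m$, the tangent-line (equivalently, trace) bound delivers $\la_{\min}(\D_m)>4/(\ln^2 m+8\ln m+8)$ directly. No second-order information about $Q_m$ near the origin is used anywhere in the argument. So the fix is not a sharper convexity estimate but simply carrying out the computation you deferred: derive the recurrence for $y_k$, prove the upper bound $y_k\leq\frac{\ln k+4}{2k}$ by induction (the induction step reduces to $4k(k+1)\ln(1+\tfrac1k)+\ln(k+1)\geq 4k$, which follows from $\ln(1+x)>x-x^2/2$), and sum.
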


We need some preparation before proving Proposition~\ref{p4.3}. Let
us consider the characteristic polynomial of $\D_m$,
$P_m(\la)=|\D_m-\la\,\E_m|$. From \eqref{e4.1} one readily obtains
the recurrence relation
\begin{equation}\label{e4.4}
\begin{split}
&P_m(\la)=\Big(4m^2-6m+\frac{5}{2}-\la\Big)\,P_{m-1}(\la)
-(m-1)^2(2m-3)^2P_{m-2}(\la)\,, \\
&P_0(\la):=1,\ \ P_1(\la)=\frac{1}{2}-\la
\end{split}
\end{equation}
(note that \eqref{e4.2} corresponds to \eqref{e4.4} in the case
$\la=0$).

Let us set
$$
P_m(\la) = \frac{[(2m-1)!!]^2}{2^m}\,Q_m(\la),\qquad m\in
\mathbb{N}_0.
$$
Then, in view of \eqref{e4.3}, $Q_m(\la)$ takes the form
\begin{equation}\label{e4.5}
Q_m(\la)=1+\sum_{j=1}^{m}(-1)^{j}q_{jm}\,\la^{j}, \qquad m\in
\mathbb{N}_0,
\end{equation}
and \eqref{e4.4} becomes
\begin{equation}\label{e4.6}
\begin{split}
&(2m-1)^2Q_{m}(\la)=\big(8m^2-12m+5-2\la\big)\,Q_{m-1}(\la)
-4(m-1)^2\,Q_{m-2}(\la)\,,\\
&Q_0(\la):=1,\ \ Q_1(\la)=1-2\la\,.
\end{split}
\end{equation}

Our next goal is to obtain two-sided estimates for the coefficients
$q_{1m}$, $m\in \mathbb{N}$. From \eqref{e4.6} we have
$$
q_{10}=0,\ \ q_{11}=2\,,
$$
and
$$
(2m-1)^2q_{1m}=(8m^2-12m+5)\,q_{1,m-1}-4(m-1)^2q_{1,m-2}+2\,,\quad
m\geq 2\,.
$$
Let us set
$$
y_{k}:=q_{1k}-q_{1,k-1}\,,\qquad k\in \mathbb{N},
$$
then the sequence $\{y_{k}\}_{k\in \mathbb{N}}$ is determined by
\begin{equation}\label{e4.7}
(2k-1)^2y_{k}=4(k-1)^2y_{k-1}+2\,,\qquad y_1=2\,.
\end{equation}

\begin{lemma}\label{l4.4}
Let $\{y_{k}\}_{k\in \mathbb{N}}$ be defined by \eqref{e4.7}. Then
the following inequalities hold true:
$$
\frac{\ln k }{2k}<y_k\leq\frac{\ln k +4}{2k}\,,\qquad k\in
\mathbb{N}\,.
$$
\end{lemma}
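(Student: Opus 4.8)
The plan is to prove the two-sided estimate for $y_k$ by induction on $k$, treating the lower and upper bounds separately but using the same mechanism: the recurrence \eqref{e4.7} rewritten as $y_k = \frac{4(k-1)^2}{(2k-1)^2}\,y_{k-1} + \frac{2}{(2k-1)^2}$ shows that $y_k$ is an increasing affine function of $y_{k-1}$, so it suffices to control what the bounds on $y_{k-1}$ propagate to under this map and then compare with the desired bounds at index $k$. First I would check the base case $k=1$ (and possibly $k=2,3$ to get the induction comfortably started, since $\ln 1 = 0$ makes the lower bound degenerate there — indeed for $k=1$ the lower bound reads $0 < 2$, trivially true, and the upper bound reads $2 \le 2$, an equality, so $k=1$ is fine as written).

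For the \emph{upper bound} $y_k \le \frac{\ln k + 4}{2k}$: assuming $y_{k-1} \le \frac{\ln(k-1)+4}{2(k-1)}$, substitute into the recurrence to get
\[
y_k \le \frac{4(k-1)^2}{(2k-1)^2}\cdot\frac{\ln(k-1)+4}{2(k-1)} + \frac{2}{(2k-1)^2}
= \frac{2(k-1)\big(\ln(k-1)+4\big) + 2}{(2k-1)^2}\,.
\]
So it remains to show $\frac{2(k-1)(\ln(k-1)+4)+2}{(2k-1)^2} \le \frac{\ln k + 4}{2k}$, i.e., after clearing denominators, $4k(k-1)(\ln(k-1)+4) + 4k \le (2k-1)^2(\ln k + 4)$. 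Writing $\ln(k-1) = \ln k + \ln(1 - 1/k) \le \ln k - \frac{1}{k}$ and using $(2k-1)^2 - 4k(k-1) = 1$, this reduces to an inequality of the form $(\ln k + 4) + \big(\text{error terms from }\ln(1-1/k)\text{ and the }+4k\text{ vs }+1\big) \ge 0$, which should hold for all $k \ge 2$ after bounding $\ln(1-1/k)$ appropriately (one may need the sharper $\ln(1-1/k) \le -1/k - 1/(2k^2)$ for small $k$, or simply verify $k=2,3$ by hand and run the asymptotic argument for $k\ge 4$).

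For the \emph{lower bound} $y_k > \frac{\ln k}{2k}$: assuming $y_{k-1} > \frac{\ln(k-1)}{2(k-1)}$ (with the convention that this is vacuous/trivial at $k-1=1$), the same substitution gives $y_k > \frac{2(k-1)\ln(k-1) + 2}{(2k-1)^2}$, so it suffices to prove $\frac{2(k-1)\ln(k-1)+2}{(2k-1)^2} \ge \frac{\ln k}{2k}$, equivalently $4k(k-1)\ln(k-1) + 4k \ge (2k-1)^2 \ln k$. Again using $(2k-1)^2 = 4k(k-1)+1$ and $\ln(k-1) = \ln k + \ln(1-1/k)$, this becomes $4k + 4k(k-1)\ln(1-1/k) \ge \ln k$, i.e. $4k\big(1 + (k-1)\ln(1-1/k)\big) \ge \ln k$. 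The bracket is positive and, since $(k-1)\ln(1-1/k) \to -1$, behaves like $\Theta(1/k)$ for large $k$, so the left side is bounded below by a positive constant while $\ln k$ also grows — here one must be a little careful: in fact $1 + (k-1)\ln(1-1/k) \ge \frac{1}{2k}$ would give $4k \cdot \frac{1}{2k} = 2 \ge \ln k$, which fails for large $k$, so a cruder bound won't do and one needs the more precise estimate $1+(k-1)\ln(1-1/k) \ge \frac{1}{2(k-1)}$ or better, giving left side $\ge 2k/(k-1) > 2$, still not enough. The correct route is to keep a genuinely larger lower bound on the bracket: since $-\ln(1-x) = x + x^2/2 + x^3/3 + \cdots$, we have $1+(k-1)\ln(1-1/k) = 1 - (k-1)\big(\tfrac1k + \tfrac1{2k^2} + \cdots\big) = \tfrac1k - (k-1)\big(\tfrac1{2k^2}+\tfrac1{3k^3}+\cdots\big)$, which is $\sim \frac{1}{2k}$, so $4k$ times it is $\sim 2$; this shows the naive induction is too lossy and one should instead \emph{not} discard the $+2/(2k-1)^2$ term prematurely, or strengthen the inductive hypothesis. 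I expect this lower-bound step to be the main obstacle: the clean statement $y_k > \frac{\ln k}{2k}$ likely requires either a strengthened induction hypothesis (e.g. $y_k \ge \frac{\ln k + c_k}{2k}$ with a controlled positive $c_k$) or a more careful non-inductive argument, for instance summing the telescoped recurrence. Concretely, from \eqref{e4.7} one gets $(2k-1)^2 y_k - (2(k-1)-1)^2 \cdot \frac{4(k-1)^2}{(2(k-1)-1)^2}\cdots$ — better, iterate directly: multiplying through, $y_k = \frac{1}{(2k-1)^2}\Big(\prod\text{-free sum}\Big)$; tracking the product $\prod_{j=2}^{k}\frac{4(j-1)^2}{(2j-1)^2}$, which telescopes against a Wallis-type product to something comparable to $\frac{1}{k}$ up to constants, yields $y_k \asymp \frac{1}{k}\sum_{j\le k}\frac1j \asymp \frac{\ln k}{k}$, and chasing the constants carefully gives the stated bounds. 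This exact-summation approach is the safer way to nail both inequalities simultaneously, with the induction used only to verify small $k$.
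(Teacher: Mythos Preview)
Your upper-bound argument is essentially the paper's: induction via the recurrence, reducing to an elementary logarithm inequality. In fact your reduction can be finished cleanly with just $\ln\!\big(1-\tfrac{1}{k}\big)\le -\tfrac{1}{k}$: the target inequality $4k(k-1)(\ln(k-1)+4)+4k\le(2k-1)^2(\ln k+4)$ is, using $(2k-1)^2=4k(k-1)+1$, equivalent to $4k(k-1)\ln\tfrac{k}{k-1}+\ln k+4\ge 4k$, and $\ln\tfrac{k}{k-1}\ge\tfrac{1}{k}$ already gives the left side $\ge 4(k-1)+\ln k+4\ge 4k$. So no sharper expansion or hand-checking of small $k$ is needed.

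The lower bound, however, is where your proposal has a real gap. You correctly discover that the naive induction on $y_k>\tfrac{\ln k}{2k}$ loses too much: after substitution the needed inequality becomes $4k\big(1+(k-1)\ln(1-\tfrac1k)\big)\ge\ln k$, and the bracket is only $\sim\tfrac{1}{2k}$, so the left side is bounded while $\ln k\to\infty$. You then suggest either a strengthened hypothesis or an exact summation, but you do not carry out either, and ``chasing the constants carefully'' is precisely the missing content. The paper's resolution is a specific and rather delicate strengthening: one proves instead
\[
y_k>\frac{\ln k}{2k-1}\,,
\]
which is stronger than the stated bound (since $2k-1<2k$) and, crucially, \emph{does} self-propagate under the recurrence. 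The induction step then hinges on the inequality
\[
\ln(x-1)>\ln x-\frac{1}{x-1}+\frac{1}{4(x-1)^2}\,,\qquad x\ge 2,
\]
established by monotonicity of the difference. The extra $\tfrac{1}{4(x-1)^2}$ is exactly what compensates for the loss you identified. Without this (or an equivalently sharp device), the lower-bound half of the lemma is not proved; your summation sketch via Wallis products could in principle be made to work, but as written it is only a heuristic for the order of magnitude and does not yield the precise constant.
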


\begin{proof} \emph{a) The upper estimate}. The proof goes by
induction, and the base case $k=1$ is obvious. Assuming the upper
estimate is true for some $k\in \mathbb{N}$, we obtain
$$
y_{k+1}=\frac{4k^2y_k+2}{(2k+1)^2}\leq\frac{2k(\ln k+4)+2}{(2k+1)^2} {\le} \frac{\ln(k+1)+4}{2(k+1)}\,.
$$
The last inequality is equivalent to
$$
4k(k+1)\ln\Big(1+\frac{1}{k}\Big)+\ln(k+1)\geq 4k\,,\qquad k\in
\mathbb{N}\,.
$$
This inequality is verified to be true, using $\ln
(1+x)>x-x^2/2\,,\ x>0$, as follows:
$$
4k(k+1)\ln\Big(1+\frac{1}{k}\Big)+\ln(k+1)>
4k(k+1)\Big(\frac{1}{k}-\frac{1}{2k^2}\Big)=4k+\frac{2(k-1)}{k} \geq
4k\,.
$$

\emph{b) The lower estimate}. We shall prove by induction the
(slightly stronger) inequality
\begin{equation}\label{e4.8}
y_{k}>\frac{\ln k}{2k-1}\,,\qquad k\in \mathbb{N}\,.
\end{equation}
Inequality \eqref{e4.8} is easily verified to be true for $k=1,\,2$.
Assuming that \eqref{e4.8} holds true for $k-1$, where $k\geq 3$, we
shall prove it for $k$. In order to do so, we observe that the
function
$f(x)=\ln\big(1-\frac{1}{x}\big)+\frac{1}{x-1}-\frac{1}{4(x-1)^2}$
is monotone decreasing for $x\in(2,\infty)$, since
$f'(x)=\frac{2-x}{2x(x-1)^3}$. Then, from $\lim_{x\to +\infty}
f(x)=0$ it follows that $f(x)>0$, hence
\begin{equation}\label{e9}
\ln(x-1)>\ln x -\frac{1}{x-1}+\frac{1}{4(x-1)^2}, \qquad
x\in[2,\infty).
\end{equation}

Applying the induction hypothesis $y_{k-1}>\frac{\ln(k-1)}{2k-3}$
and \eqref{e9} we obtain
\begin{align*}
  y_k & = \frac{4(k-1)^2 y_{k-1}+2}{(2k-1)^2}
        > \frac{4(k-1)^2\frac{\ln(k-1)}{2k-3}+2}{(2k-1)^2} \\
      & = \frac{4(k-1)^2\ln(k-1)+4k-6}{(2k-3)(2k-1)^2}
        > \frac{4(k-1)^2\big[\ln k -\frac{1}{k-1} +\frac{1}{4(k-1)^2}\big]+4k-6}{(2k-3)(2k-1)^2} \\
      & = \frac{4(k-1)^2\ln k - 1}{(2k-3)(2k-1)^2}
        = \frac{1}{2k-1}\Big(\ln k +\frac{\ln k - 1}{(2k-1)(2k-3)}\Big) \\
      & > \frac{\ln k}{2k-1}.
\end{align*}

Thus, \eqref{e4.8} is verified and Lemma~\ref{l4.4} is proved.
\end{proof}

\noindent \textit{Proof of Proposition~\ref{p4.3}.} In fact, we make
use only of the upper estimate for $y_k$ in Lemma~\ref{l4.4}, the
lower one is just to show that the major term is correctly
identified. Let us recall that $y_k=q_{1k}-q_{1,k-1}$ with
$q_{1,0}=0$. Using the upper bound in Lemma~\ref{l4.4} and the fact
that $h(x)=\frac{\ln x+4}{x}$ is a decreasing function in
$[1,\infty)$, we find
\[
\begin{split}
q_{1m}&=\sum_{k=1}^{m}y_k\leq \sum_{k=1}^{m}\frac{\ln k +4}{2k}
=2+\sum_{k=2}^{m}\frac{\ln k +4}{2k}\\
&\leq 2+\frac{1}{2}\,\int\limits_{1}^{m}h(x)\,dx=\frac{\ln^2 m+8\ln
m+8}{4}\,,
\end{split}
\]
hence
\begin{equation}\label{e4.10}
q_{1m}\leq\frac{\ln^2 m+8\ln m+8}{4}\,.
\end{equation}
The polynomial $Q_m(\la)$ in \eqref{e4.5} is the characteristic
polynomial of the positive definite matrix $\D_m$. Then its
reciprocal polynomial
$$
R_{m}(\la)=\la^{m}\,Q_m(\la^{-1})=\la^{m}+
\sum_{j=1}^{m}(-1)^{j}q_{jm}\,\la^{m-j}
$$
is the monic characteristic polynomial of the matrix $\D_m^{-1}$,
which is also positive definite. Thus, all the zeros of $R_m$ are
positive, and their sum equals $q_{1m}$. Therefore, by
\eqref{e4.10},
$$
\frac{1}{\la_{\min}(\D_m)}=\la_{\max}(\D_m^{-1})<q_{1m} \le
\frac{\ln^2 m+8\ln m+8}{4}\,,
$$
and hence
$$
\la_{\min}(\D_m)>\frac{4}{\ln^2 m+8\ln m+8}\,.
$$
Proposition~\ref{p4.3} is proved. \qed

\subsection{Estimate of $\la_{\min}(\D_m)$ from above}

Denote the entries of $\D_m^{-1}$ by $\al_{ij}$, $\;1\leq i,\,j\leq
m$. $\D_m^{-1}$ is a Hermitian matrix, therefore
$\la_{\max}(\D_m^{-1})\geq \al_{ii}$ for $i=1,\ldots,m$, and
$\al_{ii}>0$, since $\D_m^{-1}$ is positive definite. In particular,
we have
\begin{equation}\label{e4.11}
\frac{1}{\la_{\min}(\D_m)}=\la_{\max}(\D_m^{-1})\geq
\al_{11}=\frac{\d_{m-1}}{|\D_m|}\,,
\end{equation}
where $\d_{m-1}$ is the determinant of the $(m-1)\times(m-1)$
matrix, obtained from $\D_m$ by deletion of the first row and column
in $\D_m$. We observe that the sequence $\{\d_k\}_{k\in \mathbb{N}}$
satisfies the same (modulo a shift of the indices) recurrence
relation as $\{|D_k|\}$, namely,
\begin{equation}\label{e4.12}
\begin{split}
&\d_{k-1}=\Big(4k^2-6k+\frac{5}{2}\Big)\,\d_{k-2}
-(k-1)^2(2k-3)^2\,\d_{k-3}\,, \qquad k\geq 4\,,\\
&\d_1=\frac{13}{2}\,,\ \ \d_2=\frac{389}{4}\,.
\end{split}
\end{equation}
We set
$$
v_k:=\d_k-\frac{(2k+1)^2}{2}\,\d_{k-1}\,,
$$
then \eqref{e4.12} simplifies to the recurrence equation
$$
v_{k}=2k^2\,v_{k-1}\,,\qquad k\geq 3\,,\qquad v_2=16\,,
$$
whose solution is $\,v_k=2^k\big(k!\big)^2$. Now \eqref{e4.12}
becomes
\begin{equation}\label{e4.13}
\d_{k}-\frac{(2k+1)^2}{2}\,\d_{k-1}=2^k\big(k!\big)^2\,, \qquad
k\geq 2\,,\qquad \d_1=\frac{13}{2}\,.
\end{equation}
The substitution
$$
\d_k=\frac{\big[(2k+1)!!\big]^2}{2^{k+1}}u_k\,,
$$
which, in view of \eqref{e4.3}, can be written as
\begin{equation}\label{e4.14}
u_k=\frac{\d_k}{|\D_{k+1}|},
\end{equation}
transforms \eqref{e4.13} into the recurrence equation
\begin{equation}\label{e4.15}
u_{k}-u_{k-1}=2\,\Bigg(\frac{(2k)!!}{(2k+1)!!}\Bigg)^2\,, \qquad
k\geq 2\,,\qquad u_1=\frac{26}{9}\,.
\end{equation}
From \eqref{e4.15} we find
$$
u_{m-1}=u_1+\sum_{k=2}^{m-1}\big(u_k-u_{k-1}\big)=
\frac{26}{9}+2\,\sum_{k=2}^{m-1}\Bigg(\frac{(2k)!!}{(2k+1)!!}\Bigg)^2
=2\,\sum_{k=0}^{m-1}\Bigg(\frac{(2k)!!}{(2k+1)!!}\Bigg)^2\,.
$$
By multiplying inequalities $(2j)^2>(2j-1)(2j+1)$, $\,1\leq j\leq
k$, we obtain
$$
\Bigg(\frac{(2k)!!}{(2k+1)!!}\Bigg)^2>\frac{1}{2k+1}\,,
$$
therefore
$$
u_{m-1}>2\,\sum_{k=0}^{m-1}\frac{1}{2k+1}
>\int\limits_{0}^{m}\frac{dx}{x+\frac{1}{2}}
>\ln\Big(m+\frac{1}{2}\Big)\,.
$$
From \eqref{e4.14} we infer
$$
\frac{\d_{m-1}}{|\D_m|}=u_{m-1}>\ln\Big(m+\frac{1}{2}\Big)\,,
$$
which, in view of \eqref{e4.11}, completes the proof of
\begin{proposition}\label{p4.5}
For every $m\in \mathbb{N}$, there holds
$$
\la_{\min}(\D_m)<\frac{1}{\ln\big(m+\frac{1}{2}\big)}\,.
$$
\end{proposition}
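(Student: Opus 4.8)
The plan is to bound $\la_{\min}(\D_m)$ from above by producing a lower bound for $\la_{\max}(\D_m^{-1})=1/\la_{\min}(\D_m)$, and the cheapest such bound comes from a single diagonal entry of $\D_m^{-1}$. Since $\D_m$ is positive definite by Proposition~\ref{p4.1}, so is $\D_m^{-1}$, hence each of its diagonal entries $\al_{ii}$ is positive and satisfies $\la_{\max}(\D_m^{-1})\ge\al_{ii}$. I would take $i=1$; by Cramer's rule $\al_{11}=\d_{m-1}/|\D_m|$, where $\d_{m-1}$ is the determinant of the $(m-1)\times(m-1)$ matrix obtained from $\D_m$ by deleting its first row and column. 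So it suffices to prove $\d_{m-1}/|\D_m|>\ln(m+\tfrac12)$, and since Proposition~\ref{p4.1} already supplies $|\D_m|$ in closed form, the whole problem reduces to understanding the sequence of minors $\{\d_k\}$.

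Expanding $\d_k$ along its last row gives a three-term recurrence which, after an index shift, has the same shape as the one satisfied by $\{|\D_k|\}$; the two small initial values $\d_1,\d_2$ are computed directly. The essential step is to linearize this second-order recurrence. I would look for a combination $v_k:=\d_k-c_k\,\d_{k-1}$ obeying a first-order relation; matching coefficients forces $c_k=(2k+1)^2/2$ and yields $v_k=2k^2 v_{k-1}$, so $v_k=2^k(k!)^2$. Then normalizing by the known determinant $|\D_{k+1}|=[(2k+1)!!]^2/2^{k+1}$, i.e.\ setting $u_k:=\d_k/|\D_{k+1}|$, converts $\d_k-\tfrac{(2k+1)^2}{2}\d_{k-1}=2^k(k!)^2$ into the telescoping form $u_k-u_{k-1}=2\big((2k)!!/(2k+1)!!\big)^2$. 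Summing from $k=2$ and using $u_1=26/9$ collapses this to the closed expression $u_{m-1}=2\sum_{k=0}^{m-1}\big((2k)!!/(2k+1)!!\big)^2$.

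It remains to bound this sum below. Multiplying the elementary inequalities $(2j)^2>(2j-1)(2j+1)$ over $1\le j\le k$ gives the Wallis-type estimate $\big((2k)!!/(2k+1)!!\big)^2>1/(2k+1)$, so $u_{m-1}>2\sum_{k=0}^{m-1}1/(2k+1)=\sum_{k=0}^{m-1}1/(k+\tfrac12)$. Comparing each term with an integral via $1/(k+\tfrac12)>\ln\big(1+\tfrac{1}{k+1/2}\big)=\int_k^{k+1}dx/(x+\tfrac12)$ gives $u_{m-1}>\int_0^m dx/(x+\tfrac12)=\ln(2m+1)>\ln(m+\tfrac12)$. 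Combined with $\d_{m-1}/|\D_m|=u_{m-1}$ and $1/\la_{\min}(\D_m)\ge\al_{11}=\d_{m-1}/|\D_m|$, this is exactly the claimed bound.

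I expect the main difficulty to be the algebraic stage: one must guess the linearizing substitution $v_k=\d_k-\tfrac{(2k+1)^2}{2}\d_{k-1}$ and then the normalization $u_k=\d_k/|\D_{k+1}|$ so that an opaque ratio of determinants becomes a telescoping partial sum of $\big((2k)!!/(2k+1)!!\big)^2$; once that is in place, the Wallis inequality and the integral comparison are routine. It is worth noting that Proposition~\ref{p4.1} is used twice: once for positive definiteness of $\D_m^{-1}$ (so that a diagonal entry is a legitimate lower bound for $\la_{\max}(\D_m^{-1})$), and once for the explicit value of $|\D_m|$ that makes the normalization work.
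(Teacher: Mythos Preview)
Your proposal is correct and follows essentially the same route as the paper: bound $\la_{\max}(\D_m^{-1})$ below by the $(1,1)$ entry $\al_{11}=\d_{m-1}/|\D_m|$, linearize the recurrence for $\d_k$ via $v_k=\d_k-\tfrac{(2k+1)^2}{2}\d_{k-1}$, normalize to $u_k=\d_k/|\D_{k+1}|$ to obtain the telescoping increment $2\big((2k)!!/(2k+1)!!\big)^2$, and finish with the Wallis estimate and an integral comparison. One small clarification: the inequality $\la_{\max}(\D_m^{-1})\ge\al_{11}$ needs only that $\D_m^{-1}$ is Hermitian (take $\mathbf{e}_1$ in the Rayleigh quotient); positive definiteness of $\D_m$ is used to guarantee that $\D_m^{-1}$ exists and that $1/\la_{\min}(\D_m)=\la_{\max}(\D_m^{-1})$.
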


\section{Proof of Theorem~\ref{t1.1}}

With $m=\Big\lfloor\frac{n+1}{2}\Big\rfloor$, the consecutive application of 
Proposition~\ref{p2.2}, Proposition~\ref{p3.1} and Corollary~\ref{c4.2} yields 
following relations for the best constant $c_n$  in the Hardy inequality
\eqref{e1.3}:
\[
\frac{1}{c_n}=\frac{1}{2} \la_{\min}(\A_n) = \frac{1}{2}\la_{\min}(\B_m)
=\frac{1}{2}\Big(\frac{1}{2}+\la_{\min}(\D_m)\Big) =\frac{2\la_{\min}(\D_m)+1}{4}\,.
\]
Thus,
$$
c_n=\frac{4}{2\la_{\min}(\D_m)+1}=
4\Big(1-\frac{2\la_{\min}(\D_m)}{1+2\la_{\min}(\D_m)}\Big)\,.
$$
Furthermore, if $0<\underline{\la}<\la_{\min}(\D_m)<\overline{\la}$,
then
\begin{equation}\label{e5.1}
4\Big(1-\frac{2\overline{\la}}{1+2\overline{\la}}\Big)<c_n<
4\Big(1-\frac{2\underline{\la}}{1+2\underline{\la}}\Big)\,.
\end{equation}
According to Proposition~\ref{p4.3}, we have
$$
\la_{\min}(\D_m)>\frac{4}{\ln^2 m+8\ln m+8} \geq
\frac{4}{\ln^2\frac{n+1}{2}+8\ln\frac{n+1}{2}+8}=:\underline{\la}\,,
$$
while Proposition~\ref{p4.5} implies
$$
\la_{\min}(\D_m)<\frac{1}{\ln\big(m+\frac{1}{2}\big)}\leq
\frac{1}{\ln\frac{n+1}{2}}=:\overline{\la}\,.
$$
By substituting these bounds for $\la_{\min}(\D_m)$ in \eqref{e5.1},
we obtain the claim of Theorem~\ref{t1.1}\,.

\section{Proof of Theorem~\ref{t1.2}}
The proof of the lower bound for $\,d_n\,$ follows from the specific
choice of the sequence
$$
a_j=\sqrt{j}-\sqrt{j-1}\,,\qquad j=1,\ldots,n\,.
$$
With this choice, we have
$$
\sum_{k=1}^{n}\Big(\frac{1}{k}\sum_{j=1}^{k}a_j\Big)^2
=\sum_{k=1}^{n}\frac{1}{k}=H_n\,,
$$
where $\,H_n\,$ is the usual notation for the $n$-th harmonic
number. On the other hand,
$$
\sum_{k=1}^{n}a_k^2=1+\sum_{k=2}^{n}\frac{1}{(\sqrt{k}+\sqrt{k-1})^2}
<1+\frac{1}{4}\sum_{k=1}^{n}\frac{1}{k}=1+\frac{H_{n}}{4}\,.
$$
Now, using the simple inequality $\,H_{n}>\ln n$, we conclude that
$$
d_n > \frac{H_{n}}{1+H_{n}/4}
=4-\frac{16}{H_{n}+4}>4-\frac{16}{\ln n + 4}\,,
$$
which is the desired lower bound for $d_n$.

For the proof of our upper bound for $\,d_n$, we perform the
change of variables
$$
\sum_{j=1}^{k}a_j=k\,b_k\,,\qquad k=1,\ldots, n.
$$
Then inequality \eqref{e1.5} becomes
$$
\sum_{k=1}^{n}b_k^2\leq
d_n\sum_{k=1}^{n}\big(k\,b_k-(k-1)b_{k-1}\big)^2\,.
$$
The smallest $\,d_n$ for which the latter holds is given by
\begin{equation}\label{e6.1}
\frac{1}{d_n}=\min_{\mathbf{b} \in \mathbb{R}^n}
\frac{\mathbf{b}^{\intercal}\mathbf{F}_n\mathbf{b}}
{\mathbf{b}^{\intercal}\mathbf{b}}=\la_{\min}(\mathbf{F}_n)\,,
\end{equation}
where $\,\mathbf{F}_n\,$ is an $n\times n$ symmetrical Jacobi matrix
whose diagonal entries are
$$
f_{kk}=2k^2\ \ \mathrm{for}\ \ 1\leq k\leq n-1,\ \ \mathrm{and}\ \ f_{nn}=n^2\,,
$$
and the only non-zero off-diagonal entries are
$$
f_{k,k+1}=f_{k+1,k}=-k(k+1)\,,\quad k=1,\ldots,n-1\,.
$$
It is readily verified that $\mathbf{F}_n$ admits the decomposition
$$
\mathbf{F}_n=\mathbf{U}_n\mathbf{U}_n^{\intercal}\,,
$$
where $\mathbf{U_n}$ is the bi-diagonal matrix whose only nonzero entries are $u_{kk}=k$, for $k=1,\ldots, n$, and $u_{k,k+1}=-k$, for $k=1,\ldots, n-1$. 

The matrices $\,\mathbf{H}_n=\mathbf{U}^{\intercal}_n\mathbf{U}_n\,$ and
$\,\mathbf{F}_n=\mathbf{U}_n\mathbf{U}^{\intercal}_n\,$ are similar, so that they have the same eigenvalues, and in particular,
\begin{equation}\label{e6.2}
\la_{\min}(\mathbf{F}_n)=\la_{\min}(\mathbf{H}_n)\,.
\end{equation}
An easy calculation shows that $\,\mathbf{H}_n=(h_{ij})_{n\times
n}\,$ is a Jacobi matrix with entries
\begin{equation}\label{e6.3}
\begin{array}{lll}
&h_{kk}=2k^2-2k+1, & 1\leq k\leq n\,,\\
&h_{k,k+1}=h_{k+1,k}=-k^2,& 1\leq k\leq n-1\,,\\
&h_{jk}=0,  & |j-k|>1,\quad  1\leq j,\,k\leq n\,.
\end{array}
\end{equation}
Comparison of \eqref{e3.1}--\eqref{e3.2} (with $\,m=n$) and
\eqref{e6.3} shows that
$$
\mathbf{H}_n=\frac{1}{4}\,\big(\mathbf{B}_n+\mathbf{C}_n\big)\,.
$$
From the Rayleigh--Ritz theorem and \eqref{e3.3} we infer
$$
\la_{\min}(\mathbf{H}_n)=\frac{1}{4}\la_{\min}\big(\mathbf{B}_n+\mathbf{C}_n\big)
\geq
\frac{1}{4}\,\big(\la_{\min}(\mathbf{B}_n)+\la_{\min}(\mathbf{C}_n)\big)
>\frac{1}{2}\,\la_{\min}(\mathbf{B}_n)\,.
$$
Corollary~\ref{c4.2} and Proposition~\ref{p4.3}, applied with $m=n$,
yield further
$$
\la_{\min}(\mathbf{H}_n)>\frac{1}{2}\,\la_{\min}(\mathbf{B}_n)
>\frac{1}{4}+\frac{2}{\ln^2 n+8\ln n+8} =\frac{\ln^2n+8\ln n+16}{4(\ln^2n+8\ln n+8)}\,.
$$
Now the latter inequality, \eqref{e6.1} and \eqref{e6.2} imply the
desired upper bound for $\,d_n\,$:
$$
d_n=\frac{1}{\la_{\min}(\mathbf{F}_n)}=\frac{1}{\la_{\min}(\mathbf{H}_n)}<
\frac{4(\ln^2n+8\ln n+8)}{\ln^2n+8\ln n+16}=4\Bigg(1-\frac{8}{(\ln 
n+4)^2}\Bigg)\,.
$$

\bibliographystyle{amsplain}

\begin{thebibliography}{10}

\bibitem{H1919} 
G.\,H. Hardy, \textit{Notes on some points in the integral calculus, LI. On HilbertÕs double-series
theorem, and some connected theorems concerning the convergence of infinite series and
integrals}, Messenger Math. 48 (1919), 107--112.

\bibitem{H1920} 
G.\,H. Hardy, \textit{Notes on a theorem of Hilbert}, Math. Z. 6 (1920), 314--317.

\bibitem{H1925} 
G.\,H. Hardy, \textit{Notes on some points in the integral calculus, LX. An inequality between
integral}, Messenger Math. 54 (1925), 150--156. 

\bibitem{HLP1967}
G.\,H. Hardy, J.\,E. Littlewood, and G. P\'{o}lya,
\textit{Inequalities}, 2nd ed. Cambridge University Press, Cambridge, 1967.

\bibitem{Mourad87}
M. E. H. Ismail, \textit{The variation of zeros of certain orthogonal polynomials}, 
Adv. Appl. Math.  8 (1987), 111--118.

\bibitem{Mourad89}
M.\,E.\,H.\, Ismail, \textit{Monotonicity of zeros of orthogonal polynomials}, in ``$q$-Series and Partitions'', 
D. Standon, ed., Springer-Verlag, New-York, 1989, pp. 177--190.

\bibitem{MouradB}
M.\,E.\,H.\, Ismail, \textit{Classical and Quantum Orthogonal Polynomials in One Variable}, Cambridge University Press,
Cambridge, 2005.

\bibitem{KP2003}
A. Kufner, L.-E. Person, and N. Samko, \textit{Weighted Inequalities of
Hardy Type}, 2nd ed., World Scientific, Singapore, 2017.

\bibitem{KMP2006}
A. Kufner, L. Maligranda, and L.-E. Person, \textit{The prehistory of
Hardy inequality}. Amer. Math. Monthly 113 (2006),
715--732.

\bibitem{KMP2007}
A. Kufner, L. Maligranda, and L.-E. Person, \textit{The  Hardy
Inequality: About its History and Some Related Results},
Vydavatelsk\'{y} servis, 2007.

\bibitem{Lan} 
E. Landau, \textit{A note on a theorem concerning series of positive terms: Extract from a letter of Prof. E. Landau
to Prof. I. Schur}, J. London Math. Soc. 1 (1926), 38--39.

\bibitem{Sze75} G. Szeg\H{o}, \textit{Orthogonal Polynomials}, 4th ed., AMS
Colloquium Publications, Providence, RI, 1975.
\end{thebibliography}


\end{document}